\documentclass[11pt]{article}

\usepackage{amsthm,amssymb,amsmath,euscript}

\newtheorem{thm}{Theorem}[section]
\newtheorem{lemma}{Lemma}[section]
\newtheorem{definition}{Definition}[section]

\newtheorem{pro}{Proposition}[section]

\def\eps{\varepsilon}
\def\R{\mathbb{R}}
\def\C{\mathbb{C}}

\def\Om{\Omega}
\def\d{\dfrac{\mbox{d}}{\mbox{dt}}}
\def\cH{\EuScript H}

\def\ol{\overline}

\begin{document}
%\large

\begin{title}{Well-posedness of magnetic Zakharov system on 2D bounded domain}
\end{title}
\author{Oleksiy S. Shcherbina }
\date{}
\maketitle
\centerline{\em Department of Mathematics and Informatics,
Karazin Kharkiv National University,}
\centerline{\em 4 Svobody Sq. 61077 Kharkiv, Ukraine}
%\centerline{E-mail: shcherbina@mail.ru}
\begin{abstract}
This paper investigates initial-boundary value problems for the magnetic Zakharov system, which arises in plasma physics in two-dimensional domains. We prove the global well-posedness of this problem in certain Sobolev-type spaces and the continuous dependence of the solution on the initial data.
\end{abstract}
\bigskip
\par\noindent {\bf
AMS Subject Classification:} 35Q35.%; 35B40, 37L30.

\section{Introduction}

We consider the dissipative magnetic Zakharov system in a smooth 2D bounded domain
$\Omega\subset\R^2$ of the form
\begin{equation}
\label{Zakh_magnetic}
\left\{\begin{array}{ll}
i E_t + \Delta E - n E + i E\times B +i\gamma_1 E  = g_1(x), & x\in \Omega,
\\
n_{tt}+\gamma_2 n_t -\Delta\left(n+|E|^2\right)=g_2(x), & x\in \Omega,
\\
B_{tt}-\gamma_3\Delta B_t +\Delta^2\left(B+i E\times \ol{E}\right)=g_3(x), & x\in \Omega.
\end{array}\right.
\end{equation}

Here the function $E=E(x,t):\Omega\times\R_+ \to \C\times\C\times\{0\}$ is the slowly varying amplitude of the
high-frequency electric field, and $n = n(x, t) : \Omega\times\R_+ \to  R$ denotes the fluctuation
of the ion-density from its equilibrium value, and  $B=B(x,t):\Omega\times\R_+ \to\{0\}\times\{0\} \times\R$
is the self-generated
magnetic field. $\overline{E}$ denotes the conjugate complex of $E$, and the notation
$\times$ in (\ref{Zakh_magnetic}) means the cross product for $\R^3$ or $\C^3$ valued vectors. 
Parameters $\gamma_i$ are nonnegative and functions $g_1(x):\Omega\to \{0\}\times\{0\} \times\C$, $g_2(x):\Omega\to \R$ and $g_3(x):\Omega\to \R\times\R \times\{0\}$ are given.

We supplement this system with initial data
\begin{equation}
\label{initial_data}
n_t(x,0)=n_1(x),\,n(x,0)=n_0(x),\,B_t(x,0)=B_1(x),\,B(x,0)=B_0(x),\,E(x,0)=E_0(x),
\end{equation}
and with the boundary conditions
\begin{equation}
\label{bound_cond}
\begin{array}{lll}
E|_{\partial\Omega}=0, & n|_{\partial\Omega}=0, &
B|_{\partial\Omega}=\Delta B|_{\partial\Omega}=0.
\end{array}
\end{equation}

%
%Below $A=-\Delta$ with the Dirichlet boundary conditions and $D(A)=H^2\left(\Omega\right)\bigcap
%H^1_0\left(\Omega\right)$.

After omitting the magnetic field $B$ the system~(\ref{Zakh_magnetic}) is reduced to the standard Zakharov system 
\begin{align}
\label{Z-1}
\left\{\begin{array}{lll}
i E_t+\Delta E-n E+i\gamma_1 E=g_1, & x\in\Om, & t>0,
\\
n_{tt}+\gamma_2 n_t-\Delta\left(n+|E|^2\right)=g_2, & x\in\Om, & t>0,
\end{array}\right.
\end{align}
which was proposed by  Zakharov~\cite{Zakharov} for the description of wave phenomena in plasma. 

\par
In the case $\Om=\R^{d}$ ($d\ge 1$), $\gamma_1=\gamma_2=0$ and $g_1(x)=g_2(x)=0$ the Cauchy problem for (\ref{Z-1})
was studied by many authors (see, e.g.,\/ \cite{Bourgain1,Bourgain2,Ginibre}
and the references therein). The local well-posedness was shown in appropriate
Sobolev classes (see \cite{Ginibre} for the most advanced results)
and there are solutions which blow up in finite time for $d =2$
\cite{Glangetas}. In the case $d\le 3$ for some special class  of initial data
the global well-posedness is also known \cite{Bourgain2}.
\par
In the case when $\Omega\subset \R^d$ is a bounded domain($d=1,2$) and $\gamma_i\ge 0$ the Cauchy problem for (\ref{Z-1}) for $d=1$ was studied by~\cite{Flahaut, Goubet, Shcherbina}. 
The well-posedness was proved of~(\ref{Z-1}) for Dirichlet and periodic bounded conditions, 
existence of the compact global attractor $\mathcal{A}$ and (in the case of periodic boundary conditions) analyticity of elements of $\mathcal{A}$.
In 2D case system~(\ref{Z-1}) was  studied in~\cite{ChuShch} for different types of boundary conditions. 
Global well-posedness was proved in some Sobolev type classes and studied properties of the solutions. 
In the dissipative case the existence of a global attractor is established.
\par
The system~(\ref{Zakh_magnetic}) at first was derived by X.~He~\cite{He} for description of the  pondermotive force and magnetic field 
generation effects resulting from the non-linear interaction between plasma-wave and particles. 
In the case $\Om=\R^{d}$ ($d=2, 3$), $\gamma_1=\gamma_2=0$ and $g_1(x,t)=g_2(x,t)=0$ the Cauchy problem for (\ref{Zakh_magnetic})
was studied by\cite{Zhang}. 
It was proved the local existence and uniqueness of the solution for this system.
\par
In this paper we prove that system~(\ref{Zakh_magnetic}) has a unique solution on $[0, T]$, where either $T\sim \|g_1\|_{L_2}^{-1}$ in a conservative case $\gamma_1=0$ or $T=\infty$ %$\max\left \{\|E_0\|^2, \gamma_1^{-2}\sup\limits_{t\in[0,T]}\|g_1(t)\|^2\right \}<$ 
in the dissipative case $\gamma_1>0$.
To prove the existence of solutions we use Galerkin approximations
and the standard compactness method. The proof of their uniqueness
relies on the method which was developed in the theory of shallow
shells for the case of critical nonlinearities
(see \cite{Sedenko_1} and also \cite{Chueshov, ChuShch}). 
Additionally, it was proved some energy relation for this solution and continuous dependence on initial data.

%Let $U$ be a $\C^3$ valued function on $\Omega$. Then it is straightforward to check that
%$U\times \ol{U}$ is  purely imaginary. Therefore $iE\times \ol{E}$ in the third
%equation in (\ref{Zakh_magnetic}) is real-valued $\R^3$ function with
%zero two first components.

%We would like to recall also the well-known Agmon
%inequality for $u \in H^{2}(\Omega)\bigcap H^{1}_0(\Omega)$
%\begin{equation}
%\label{Agmon}
% \| u \|_{L^{\infty}} \leq C\| u \|^{1/2}
% \|\Delta u \|^{1/2}.
%\end{equation}

\section{Statement of the main results}

Below we denote  by $\|\cdot\|$ and $(\cdot,\cdot)$ the norm and the inner product correspondently in the space  $H=L_2(\Omega)$ and consider
 the  minus Laplace operator  with the Dirichlet boundary conditions:
\[
Au=-\Delta u,~~~ u\in D(A)=H^2\left(\Omega\right)\bigcap
H^1_0\left(\Omega\right).
\]
The operator $A$ is  positive and has a discrete spectrum, i.e.,
there exists an orthonormal basis $\{e_k\}$ in $H$ consisting of eigenfunctions
of $A$:
\[
A e_k=\lambda e_k, ~~k=1,2,\ldots,~~0<\lambda_1\le\lambda_2\le\ldots, ~~\lim_{k\to\infty}\lambda_k=+\infty.
\]
We  also use the notation $H_s=D(A^{s/2})$ for $s\ge 0$ and endow this space
with the graph norm    $\|\cdot \|_s=\|A^{s/2}\cdot\|$.
If $s<0$, then $H_s$ denotes the completion of $H_0$ with respect to
$\|\cdot \|_s=\|A^{s/2}\cdot\|$.

%\par
%It is clear from the interpolation theory \cite{Lions} that
%\[
%H_s=\left\{
%  \begin{array}{ll}
%    (H^s\cap H_0^1)(\Om), & \hbox{for}~~\frac12<s<\frac52,~s\neq\frac32 \\
%    H^s(\Om), & \hbox{for}~~-\frac32<s<\frac12,~s\neq-\frac12.
%  \end{array}
%\right.
%\]
We also mention that we deal with both real and complex versions
of the spaces $H_s$. We keep notation $H_s$ for real case
and denote by $\ol{H_s}$ its complexification. For norms and inner products
we use the same notations.
\par
We understand solutions to problem
(\ref{Zakh_magnetic})--(\ref{initial_data}) in the sense of the following
definition.
\begin{definition}\label{de:sol}
{\rm
A triple  $(n;B;E)$ is said to be a 
%\emph{semi-weak} solution to
%problem
%(\ref{Zakh_magnetic})--(\ref{initial_data})
%on an interval $[0,T]$ iff
%\begin{equation}\label{sol-def}
%(n_t; n; B_t; B; E)\in  L_\infty\left(0,T;\, \cH_0\right),~~\cH_0\equiv
%H_{-1}\times H_0\times {H_{-2}}\times {H_0}\times\overline{H}_1,
%\end{equation} and
%relations (\ref{Zakh_magnetic}) and (\ref{initial_data}) are satisfied in the sense
%of distributions. This solution is called 
{\em semi-strong} solution if
\begin{equation}\label{sol-def-ss}
(n_t; n; B_t; B; E)\in  L_\infty\left(0,T;\, \cH\right),~~\cH\equiv
H_{0}\times H_1\times {H_{-1}}\times {H_1}\times\overline{H}_2.
\end{equation}
}
\end{definition}
\par
We also note that by (\ref{Zakh_magnetic}) and (\ref{sol-def-ss}) we have that
\begin{equation}\label{weak-cont}
n_{tt}\in  L_\infty\left([0,T];\,  H_{-1}\right) \quad B_{tt}\in  L_\infty\left([0,T];\,  H_{-3}\right)
\quad\mbox{and}\quad E_{t}\in L_\infty\left([0,T];\, \overline{H}\right).
\end{equation}
Therefore (after changing on a set of zero measure in  $[0,T]$) the vector $(n_t; n; B_t; B; E)$ is a weakly continuous function with values
in $\cH$ (see~\cite[Lemma~8.1]{Lions}).
In particular, this means that initial data (\ref{initial_data}) make sense.
Moreover, (\ref{sol-def-ss}) and (\ref{weak-cont}), Aubin's embedding  theorem (see \cite[Corollary 4]{sim}) imply that
\begin{equation}\label{Z-def2}
(n_t; n; B_t; B; E_t; E)\in  C\left([0,T];\, H_{-\sigma}\times H_{1-\sigma}
  \times H_{-\sigma}\times {H_{-1-\sigma}}\times {H_{1-\sigma}}\times H_{2-\sigma}\right)
\end{equation}
for every $\sigma>0$.
\par
Our first result is the following assertion on the existence of a semi-strong solution of problem (\ref{Zakh_magnetic})--(\ref{initial_data}).
\begin{thm}
\label{t:exist}
Let $(n_1, n_0, B_1, B_0, E_0)\in \cH$
and $g_1 \in \overline{L_4\left (\Omega\right )}$, $g_2\in L_2\left (\Omega\right )$, $g_3 \in L_2\left (\Omega\right )$.
Then  problem (\ref{Zakh_magnetic})--(\ref{initial_data}) has a
semi-strong solution on an interval $[0,T]$ provided that
\begin{itemize}
    \item either $\gamma_1\ge 0$ and $\|E_0\|+T\cdot \|g_1\|<\dfrac{1}{\sqrt{2}}$,
    \item or else  $\gamma_1>0$, and
    $\max\left \{\|E_0\|, \gamma_1^{-1}\sup\limits_{t\in[0,T]}\|g_1(t)\|\right \}<\dfrac{1}{\sqrt{2}}$.
\end{itemize}
%This solution depends continuously
%on initial data, i.e.,
%\begin{eqnarray}
%\label{c-id}
%\lefteqn{\lim_{k\to\infty}\max_{[0,T]}\left(
%\|n^k_t(t)-n_t(t)\|^2+\| n^k(t)-n(t)\|_1^2\right.
%} \nonumber
%\\
% & & \qquad
%\left.+ \|B^k_t(t)-B_t(t)\|_{-1}^2+\| B^k(t)-B(t)\|_1^2
%+\| E^k(t)-E(t)\|_2^2\right)=0,
%\end{eqnarray}
%where $(n;B;E)$ is the solution with initial data $U\equiv (n_1;n_0;B_1;B_0;E_0)$ and
%$(n^k;B^k;E^k)$ is the solution with initial data $U^k\equiv(n^k_1;n^k_0;B^k_1;B^k_0;E^k_0)$
%such that $U^k\to U$ in the space
%$\cH$ as $k\to\infty$.
\end{thm}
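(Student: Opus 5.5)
We construct the solution by Faedo--Galerkin approximations in the eigenbasis $\{e_k\}$ of $A$. We look for $E^m=\sum_{k\le m}c_k(t)e_k$, $n^m=\sum_{k\le m}a_k(t)e_k$, $B^m=\sum_{k\le m}b_k(t)e_k$ solving the projections of (\ref{Zakh_magnetic}) onto $\mathrm{span}\{e_1,\dots,e_m\}$ with projected initial data. This is a finite system of ODEs with locally Lipschitz right-hand side, so local solutions exist. Global existence of the approximations on $[0,T]$, and the passage to the limit, both follow from a priori bounds in $\cH$ that are uniform in $m$.

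\emph{Step 1 ($L_2$ bound for $E$).} Multiply the first equation by $\ol{E^m}$, integrate, and take the imaginary part. The terms $-n|E|^2$ are real and drop out, and so does $\Delta E\cdot\ol E$ after integration by parts. Also $(iE\times B)\cdot\ol E$ contributes $\mathrm{Re}\,((E\times B)\cdot \ol E)=0$, because $(E\times B)\cdot\ol E=-(E\times\ol E)\cdot B$ and $E\times\ol E$ is purely imaginary. This leaves
\[
\tfrac12\tfrac{d}{dt}\|E\|^2+\gamma_1\|E\|^2=\mathrm{Im}(g_1,E).
\]
Hence $\tfrac{d}{dt}\|E\|\le -\gamma_1\|E\|+\|g_1\|$. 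This gives $\|E(t)\|\le\|E_0\|+t\|g_1\|$ in the first case. In the dissipative case it gives $\|E(t)\|\le\max\{\|E_0\|,\gamma_1^{-1}\sup\|g_1\|\}$. Under either hypothesis, $\sup_{[0,T]}\|E(t)\|<1/\sqrt2$.

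\emph{Step 2 (energy functional).} Multiply the first equation by $\ol{E_t}$ and take real parts. Multiply the second by $A^{-1}n_t$ and the third by $A^{-2}B_t$. Adding these gives the Zakharov--He energy
\[
\mathcal E=\|\nabla E\|^2+\tfrac12\|n_t\|_{-1}^2+\tfrac12\|n\|^2+\int n|E|^2+\tfrac12\|B_t\|_{-2}^2+\tfrac12\|B\|^2+\int B\cdot(iE\times\ol E)
\]
plus the forcing terms, with dissipative terms and $g$-terms on the right. The cross terms are controlled by the 2D Gagliardo--Nirenberg (Ladyzhenskaya) inequality $\|E\|_{L_4}^4\le C_{GN}\|E\|^2\|\nabla E\|^2$, which together with Step 1 gives, for instance,
\[
\Big|\int n|E|^2\Big|\le \tfrac12\|n\|^2+\tfrac12\|E\|_{L_4}^4,
\]
and similarly for $B$. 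The smallness condition $\|E\|<1/\sqrt2$ is exactly what makes $\|\nabla E\|^2-\|E\|_{L_4}^4\ge c\|\nabla E\|^2$ with the normalization of the constants intended by the paper. This is the step where the threshold $1/\sqrt2$ enters, and it is delicate because the nonlinearity is $L_2$-critical in 2D.

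\emph{Step 3 (higher-order bounds).} Next we need bounds in $\cH$, i.e.\ $E\in \ol H_2$, $n\in H_1$, $n_t\in H_0$, $B\in H_1$, $B_t\in H_{-1}$. We differentiate the Schr\"odinger equation in time and set $F=E_t$. Multiplying the $n$-equation by $n_t$ and the $B$-equation by $A^{-1}B_t$ produces a second-level energy involving $\|E_t\|^2$, $\|n_t\|^2+\|\nabla n\|^2$ and $\|B_t\|_{-1}^2+\|B\|_1^2$. The nonlinear terms include $(n_tE,E_t)$, $(\nabla|E|^2,\nabla n_t)$ and $(E\times B_t,E_t)$. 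These are estimated using the Step 2 bound $E\in H_1$, Sobolev $H_1\subset L_p$ for all $p<\infty$, and the Brezis--Gallouet logarithmic inequality for $\|E\|_{L_\infty}$ in terms of $\|E\|_2$. This yields a differential inequality of the form $\dot y\le C(1+y\log(1+y))$, which is globally solvable on $[0,T]$. Finally, $\|E\|_2$ is recovered from the first equation via $\|\Delta E\|\le\|E_t\|+\|nE\|+\|E\times B\|+\ldots$, using $\|nE\|\le\|n\|_{L_4}\|E\|_{L_4}$ and $H_1$ bounds. I expect the term $\int B_t\cdot(E\times \ol E_t)$ with $B_t$ only in $H_{-1}$ to be the main obstacle. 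For it I would first try moving one derivative onto $E\times\ol E_t$ and using the structure of the equation (substituting $E_t$ from the first equation) to avoid needing $\nabla E_t$.

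\emph{Step 4 (limit).} With uniform bounds in $L_\infty(0,T;\cH)$ and bounds on $(n^m_{tt},B^m_{tt},E^m_t)$ as in (\ref{weak-cont}), we extract weak-$*$ convergent subsequences. The Aubin--Simon compactness from \cite{sim} gives strong convergence in the spaces of (\ref{Z-def2}), which suffices to pass to the limit in $nE$, $E\times B$, $|E|^2$ and $E\times\ol E$. The limit is a semi-strong solution, and it satisfies the initial data by weak continuity.
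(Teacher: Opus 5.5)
Your Steps 1, 2 and 4 follow the paper: Galerkin approximations, the bound $\|E\|\le\kappa$, coercivity of the energy $G_1$ from Lemma~\ref{le:1s} when $\kappa<1/\sqrt2$, and Aubin--Simon compactness. One small correction in Step 2: the split $|\int n|E|^2|\le\frac12\|n\|^2+\frac12\|E\|_{L_4}^4$ uses up all of $\frac12\|n\|^2$ and $\frac12\|B\|^2$, so you lose the bound on $\|n\|$ and $\|B\|$. Use instead $\frac{\theta}{2}\|n\|^2+\frac{1}{2\theta}\|E\|_{L_4}^4$ with $2\kappa^2<\theta<1$. Equivalently, use the paper's positive-definiteness argument, whose determinant is $\frac14-\frac{\kappa^2}{2}$.

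The genuine gap is Step 3.

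\emph{The term you single out is not the obstacle.} The multiplier $A^{-1}B_t$ produces the coupling $\int B_t\cdot i\Delta(E\times\ol E)$. Write $\Delta(E\times\ol E)=\Delta E\times\ol E+E\times\Delta\ol E+2\sum_j\partial_jE\times\partial_j\ol E$ and replace $\Delta E$ by $-iE_t+nE-iE\times B-i\gamma_1E+g_1$. This gives exactly the negative of $\Re\int B_t\cdot(E\times\ol{E_t})$, just as $\Im\int n_tE\cdot\ol{E_t}$ cancels against part of $\int n_t\Delta|E|^2$. This is the computation behind Proposition~\ref{p:energy-rel}. Your suggested remedy, substituting $E_t=i\Delta E-\dots$, goes the wrong way. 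It creates $\int B_t\cdot(E\times\Delta\ol E)$, which with $B_t\in H_{-1}$ would need $E\in H_3$.

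\emph{The real obstacle is what survives the cancellation.} That is $2i\int B_t\cdot(\partial_1E\times\partial_1\ol E+\partial_2E\times\partial_2\ol E)$, cf.\ (\ref{en-rel2}). It cannot be bounded by $\|B_t\|_{-1}$ times a function of $\|E\|_2$. The reason is that $\sum_j\partial_jE\times\partial_j\ol E$ does not vanish on $\partial\Om$. It is also not controlled in $H^1$ by $\|E\|_2$, since that would need $\nabla E\in L_\infty$, which is false for $H^2$ in 2D. Brezis--Gallouet applied to $\|E\|_{L_\infty}$ does not reach this term. So no inequality $\dot y\le C(1+y\log(1+y))$ follows.

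The paper meets this same term in $R_2$, see (\ref{R_2_est}). It estimates it by $C\|\nabla E\|_{L_4}^2\|B_t\|\le\eps\|B_t\|^2+C_\eps\|AE\|^2$. It then absorbs $\eps\|B_t\|^2$ into the dissipation $\gamma_3\int_0^t\|B_t\|^2$, which the same multiplier $A^{-1}B_t$ puts on the left-hand side, see (\ref{exist10}). This is also what yields (\ref{B_t}), which is used later. Your plan never uses $\gamma_3$. The missing ingredient is exactly this $L_2(0,T;L_2)$ control of $B_t$, and it requires $\gamma_3>0$, an assumption the paper's proof uses implicitly.
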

Our next result is the following uniqueness theorem for semi-strong solutions of problem (\ref{Zakh_magnetic})--(\ref{initial_data}).
\begin{thm}
\label{t:uniqueness}
Problem (\ref{Zakh_magnetic})--(\ref{initial_data}) on every interval $[0, T]$ has at most one semi-strong solution.
\end{thm}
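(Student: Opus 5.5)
The plan is to take two semi-strong solutions $(n;B;E)$ and $(n^*;B^*;E^*)$ on $[0,T]$ with the same data, write $m=n-n^*$, $b=B-B^*$, $e=E-E^*$, and run an energy estimate for the differences one level below the regularity of $\cH$. Concretely, the difference energy will be
\[
\mathcal E(t)=\|m_t\|_{-1}^2+\|m\|^2+\|b_t\|_{-2}^2+\|b\|^2+\|e\|_1^2 ,
\]
with $\mathcal E(0)=0$.

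The difference system is
\[
\left\{\begin{array}{l}
i e_t-Ae-mE-n^*e+i e\times B+iE^*\times b+i\gamma_1 e=0,\\[2pt]
m_{tt}+\gamma_2 m_t+A m=-A\bigl(|E|^2-|E^*|^2\bigr),\\[2pt]
b_{tt}+\gamma_3 A b_t+A^2 b=-A^2\bigl(iE\times\ol{E}-iE^*\times\ol{E^*}\bigr).
\end{array}\right.
\]
I intend to pair the second equation with $A^{-1}m_t$, the third with $A^{-2}b_t$, and the first with $\ol{e_t}$ (real part), or equivalently with $A\ol{e}$ (imaginary part). All of these pairings should first be done on Galerkin projections $P_N$ of the difference and only then passed to the limit, as in Sedenko's method, since the solutions themselves are not smooth enough to be used as multipliers.

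The quadratic coupling terms will be handled by the identity
\[
(w,\partial_t\phi)=\frac{d}{dt}(w,\phi)-(w_t,\phi).
\]
In the wave equation for $m$, the right-hand side $w=2{\rm Re}(e\,\ol{E})+|e|^2$ lies in $H_1$ with $\|w\|_1\le C\|e\|_1$, where $C$ depends on $\sup_t(\|E\|_2+\|E^*\|_2)$; this uses $H_2\subset L_\infty\cap W^{1,4}$ in 2D. Hence $(w,m_t)$ is controlled directly by $\|m_t\|_{-1}\|e\|_1$. In the equation for $b$ the pairing $(iE\times\ol{e}+ie\times\ol{E^*},\,b_t)$ cannot be bounded directly, so I would move the time derivative onto the nonlinearity. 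The boundary term $(w,b)$ is then absorbed into $\mathcal E$ by smallness plus $\varepsilon$-Young. The remaining term $(w_t,b)$ involves $E_t,E^*_t\in L_\infty(0,T;\ol H)$ and $e_t$.

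In the Schrödinger part, the terms $mE$, $n^*e$, $e\times B$, $E^*\times b$ paired with $\ol{e_t}$ are treated the same way. For instance,
\[
{\rm Re}(mE,\ol{e_t})=\frac{d}{dt}{\rm Re}(mE,\ol e)-{\rm Re}(m_tE+mE_t,\ol e),
\]
and $m_tE\ol e$ is controlled by $\|m_t\|_{-1}\|E\ol e\|_1\le C\|m_t\|_{-1}\|e\|_1$.

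The main obstacle is that several leftover terms (notably $(n^*_t e,\ol e)$, $(mE_t,\ol e)$, and the terms containing $e_t$ or $E_t$ multiplied by $b$) require an $L_\infty$ bound on $e$ or on $\ol e\,E$. In 2D this is exactly critical: $H_1\not\subset L_\infty$. This is where Sedenko's approach for critical nonlinearities enters. Let $P_N$ be the spectral projection onto $e_1,\dots,e_N$. I will split $e=P_Ne+(1-P_N)e$. For the low modes, use
\[
\|P_Ne\|_{L_\infty}\le C\sqrt{\ln(1+\lambda_N)}\,\|e\|_1 .
\]
For the high modes, use
\[
\|(1-P_N)e\|_{1}\le\lambda_N^{-1/2}\|e\|_2\le C\lambda_N^{-1/2},
\]
since both solutions are bounded in $\ol H_2$. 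This yields a differential inequality of the form
\[
\mathcal E(t)\le C\int_0^t\Bigl(\ln(1+\lambda_N)\,\mathcal E(s)+\lambda_N^{-1/2}\Bigr)ds ,
\]
uniformly in $N$. By Gronwall,
\[
\mathcal E(t)\le C\,t\,\lambda_N^{-1/2}(1+\lambda_N)^{Ct}.
\]
This tends to $0$ as $N\to\infty$ on a short interval $t<t_0$ with $Ct_0<1/2$, where $t_0$ depends only on the $L_\infty(0,T;\cH)$ norms of the two solutions. Therefore $\mathcal E\equiv0$ on $[0,t_0]$. Repeating the argument on $[t_0,2t_0]$ and so on covers $[0,T]$ and proves the uniqueness statement.

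I expect the delicate bookkeeping to be in the magnetic terms. One has to check that every product involving $b\in H_0$ and $e_t$ or $E_t\in H_0$ can be placed in the "$\log\times\mathcal E$ plus small remainder" form; if $e_t$ appears with insufficient regularity, I would take the Schrödinger multiplier to be $A\ol e$ instead of $\ol{e_t}$ and estimate those terms by the same low/high mode splitting.
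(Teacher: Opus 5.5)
Your route differs from the paper's. You work at the energy level of $G_1$, using the multiplier $\ol{e_t}$ and moving time derivatives, and you apply the logarithmic splitting to the difference $e$. The paper instead differentiates the Schr\"odinger equation in time and uses only imaginary-part (mass-type) multipliers at the fractional level $E_t\in H_{-s}$, $E,n,B\in H_{1-s}$. It applies the splitting to the background solutions $n^{(1)},B^{(2)},\nabla E^{(1)}$, and absorbs the $B_t$-terms into $2\gamma_3\int_0^t\|B_t\|_{-s}^2$, so it uses $\gamma_3>0$.

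Your route can be made to work, but as written it has a genuine gap in the magnetic coupling. If you move $\partial_t$ off $b_t$ in the $b$-equation, then $(w_t,b)$ contains terms such as $\int_\Omega b\cdot(E\times\ol{e_t})=\int_\Omega \ol{e_t}\cdot\phi$ with $\phi=b\times E$. At your level, $\mathcal E$ controls $e_t$ only in $H_{-1}$ (its $L_2$-norm is merely bounded) and $b$ only in $L_2$. Interpolation gives only $C\mathcal E^{1/2}$. The splitting gives $|\int_\Omega\ol{e_t}\cdot P_N\phi|\le\|e_t\|_{-1}\|P_N\phi\|_1\le C\lambda_N^{1/2}\mathcal E$, a polynomial rate that no $\lambda_N^{-1/2}$ remainder can beat in Gronwall.

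The Schr\"odinger part has the same obstruction. There $2\Re\int_\Omega i(E^*\times b)\cdot\ol{e_t}=2\int_\Omega b\cdot\Im(E^*\times\ol{e_t})$, and moving $\partial_t$ produces $\int_\Omega b_t\cdot\Im(E^*\times\ol e)$ with $b_t$ only in $H_{-2}$. Your fallback multiplier $A\ol e$ does not help, since it needs $mE$ and $E^*\times b$ in $H_1$ while $m,b$ are only in $L_2$.

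The missing idea is that these cross terms cancel, exactly as in the conservation of $G_1$, so they must not be estimated separately. Write $v=i(E\times\ol E-E^*\times\ol{E^*})=-2\Im(E^*\times\ol e)+i\,e\times\ol e$. Use the weights $\|\nabla e\|^2+\frac12(\|m_t\|_{-1}^2+\|m\|^2+\|b_t\|_{-2}^2+\|b\|^2)$, and move $\partial_t$ only in the Schr\"odinger part. Then
\[
2\int_\Omega b\cdot\Im(E^*\times\ol{e_t})+2\int_\Omega b_t\cdot\Im(E^*\times\ol e)
=2\frac{d}{dt}\int_\Omega b\cdot\Im(E^*\times\ol e)-2\int_\Omega b\cdot\Im(E^*_t\times\ol e),
\]
where the second term on the left is the principal part of $-(v,b_t)$.

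The leftovers $\int_\Omega b\cdot\Im(E^*_t\times\ol e)$, $\Re\int_\Omega mE_t\cdot\ol e$ and $\langle B_t,i\,\ol e\times e\rangle$ are then handled by your splitting of $e$. Two adjustments are needed:
\begin{itemize}
\item For these terms you need $\|Q_Ne\|_{L_\infty}\le C\|Q_Ne\|_{1+\delta}\le C\lambda_{N+1}^{-(1-\delta)/2}$, not the $H_1$ bound you wrote.
\item Boundary terms such as $\int_\Omega mE\cdot\ol e$ must be absorbed via the mass estimate $\frac{d}{dt}\|e\|^2\le C\|e\|(\|m\|+\|b\|)$.
\end{itemize}
Alternatively, when $\gamma_3>0$ you can bound $|(v,b_t)|\le C\|e\|_1\|b_t\|_{-1}$ directly by the dissipation $\gamma_3\|b_t\|_{-1}^2$, contrary to your claim that it cannot be bounded directly. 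The cancellation route has the advantage of also covering $\gamma_3=0$.
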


And finally, we prove some additional continuous conditions for problem (\ref{Zakh_magnetic})--(\ref{initial_data}).
\begin{thm}
\label{t:continuos}
The semi-strong solution of problem (\ref{Zakh_magnetic})--(\ref{initial_data}) possesses the property
\begin{equation}
\label{t_cont}
\left (n_t, n, B_t, B, E_t, E\right )\in C\left ([0, T], H_{0}\times H_1\times {H_{-1}}\times {H_1}\times\overline{H}\times\overline{H}_2\right ).
\end{equation}
Moreover, this solution depends continuously on initial data, i.e.,
\begin{eqnarray}
\label{c-id}
\lefteqn{\lim_{k\to\infty}\max_{[0,T]}\left(
\|n^k_t(t)-n_t(t)\|^2+\| n^k(t)-n(t)\|_1^2+\|B^k_t(t)-B_t(t)\|_{-1}^2\right.
} \nonumber
\\
 & & \qquad
\left.+\| B^k(t)-B(t)\|_1^2+\| E^k_t(t)-E_t(t)\|^2
+\| E^k(t)-E(t)\|_2^2\right)=0,
\end{eqnarray}
where $(n;B;E)$ is the solution with initial data $U\equiv (n_1;n_0;B_1;B_0;E_0)$ and
$(n^k;B^k;E^k)$ is the solution with initial data $U^k\equiv(n^k_1;n^k_0;B^k_1;B^k_0;E^k_0)$
such that $U^k\to U$ in the space
$\cH$ as $k\to\infty$.
\end{thm}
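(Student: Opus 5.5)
We prove Theorem~\ref{t:continuos} by the standard energy-equality argument: weak continuity plus continuity of an energy-type norm gives strong continuity. Continuous dependence then follows from the uniqueness argument together with a compactness/energy comparison.

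\emph{Step 1: a higher-order energy identity.} The natural quantities are $\|n_t\|^2+\|n\|_1^2$, $\|B_t\|_{-1}^2+\|B\|_1^2$ and $\|E_t\|^2$. For Galerkin approximations we differentiate the first equation of (\ref{Zakh_magnetic}) in $t$ and multiply by $\ol{E_t}$; we also multiply the second equation by $n_t$ and the third by $A^{-1}B_t$. The cross terms $(n E_t,\ol{E_t})$ are real, so they drop out after taking the imaginary part. The coupling terms $-(n_t E,\ol{E_t})$ and $(\Delta|E|^2,n_t)$ combine into a time derivative plus lower-order terms, and the same holds for the $E\times B$ and $iE\times\ol E$ couplings. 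This yields an identity
\[
\mathcal{E}(t)+\int_0^t\mathcal{D}(s)\,ds=\mathcal{E}(0)+\int_0^t\mathcal{F}(s)\,ds,
\]
where $\mathcal{E}$ equals the sum of the squared norms above plus continuous-in-$\sigma$-norm terms, and $\mathcal{F}\in L_1(0,T)$ by the a priori bounds in $\cH$. We then pass to the limit in this identity for the semi-strong solution itself. By Theorem~\ref{t:uniqueness} the Galerkin limit is this solution. Alternatively, we derive the identity directly by regularization, using the Sedenko-type estimates behind Theorem~\ref{t:uniqueness}; these control the critical terms $(n_tE,\ol{E_t})$ in 2D via $\|E\|_{L_\infty}\le C\|E\|_2$ and $H_1\subset L_4$. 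The third equation gives $E_t\in L_\infty(0,T;\ol H)$, and its $t$-derivative gives $E_{tt}\in L_\infty(H_{-2})$, so $E_t$ is weakly continuous in $\ol H$. Weak continuity of all components, combined with continuity of $t\mapsto\mathcal{E}(t)$, gives strong continuity of $(n_t,n,B_t,B,E_t)$ in $H_0\times H_1\times H_{-1}\times H_1\times\ol H$. Finally, $E\in C([0,T];\ol H_2)$ follows by writing $\Delta E=-iE_t+nE-iE\times B-i\gamma_1E+g_1$: the right-hand side is continuous in $\ol H$ because $nE$ and $E\times B$ are continuous in $L_2$ (using $H_1\subset L_4$ and $E\in C(\ol H_{2-\sigma})\subset C(L_\infty)$).

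\emph{Step 2: continuous dependence.} Let $U^k\to U$ in $\cH$. The a priori bounds from Theorem~\ref{t:exist} are uniform in $k$. We apply the difference estimate from the uniqueness proof (Theorem~\ref{t:uniqueness}) in a lower norm, e.g. $H_{-1}\times H_0\times H_{-2}\times H_0\times\ol H_1$. That estimate has the form
\[
Z_k(t)\le C\bigl(Z_k(0)+\ldots\bigr),
\]
with a Sedenko-type logarithmic Gronwall, and gives convergence in the weaker norm uniformly on $[0,T]$. Interpolating with the uniform bounds, we get convergence in $C([0,T];\cH_{-\sigma})$. To upgrade to the full norm (\ref{c-id}) we use the energy identity of Step~1 for both solutions. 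From $U^k\to U$ we get $\mathcal{E}^k(0)\to\mathcal{E}(0)$, and $\int_0^t\mathcal{F}^k\to\int_0^t\mathcal{F}$ uniformly in $t$ by the weaker convergence and the uniform bounds. Hence $\mathcal{E}^k(t)\to\mathcal{E}(t)$ uniformly. Combining this with weak convergence uniform in $t$ (through any sequence $t_k\to t$) yields norm convergence uniformly on $[0,T]$. Convergence of $E^k$ in $\ol H_2$ again follows from the elliptic relation.

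\emph{Main obstacle.} The hard part is justifying the energy identity for $E_t$ at the semi-strong level. The term $(n_tE,\ol{E_t})$ and the terms $E_t\times B$, $E\times B_t$ sit exactly at the critical regularity: $B_t\in H_{-1}$ only, so $E\times B_t$ must be paired with $E_t$ via $H_{-1}$–$H_1$ duality, which needs $E\ol{E_t}\in H_1$. We would handle this by first proving the identity for Galerkin solutions, where every term is smooth. We then pass to the limit using the strong convergences from the uniqueness-type estimate, regrouping the dangerous terms as exact time derivatives, e.g. $\frac{d}{dt}(B,\,iE\times\ol{E})$-type expressions, so that only subcritical remainders need limits.
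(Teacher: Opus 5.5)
Your overall architecture matches the paper's Propositions~\ref{p:energy-rel}--\ref{p:cont}: an energy identity for $\|n_t\|^2+\|n\|_1^2+\|B_t\|_{-1}^2+\|B\|_1^2+\|E_t\|^2$, then convergence in weaker norms, then an upgrade via the identity and weak convergence. However, the central step, the energy \emph{identity} for a semi-strong solution, is not established.

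Passing to the limit in the Galerkin identity gives only $\mathcal{E}(t)+\int_0^t\mathcal{D}\le\mathcal{E}(0)+\int_0^t\mathcal{F}$, because $\mathcal{E}_N(t)$ and $\int_0^t\mathcal{D}_N$ are merely weakly lower semicontinuous. The Sedenko-type difference estimates control only $H_{-s}\times H_{1-s}$-type norms, so using them to force equality is circular. Running time backwards is impossible because of the strong damping $-\gamma_3\Delta B_t$.

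Worse, your claim that $\mathcal{F}\in L_1(0,T)$ follows from bounds in $\cH$ is false. $\mathcal{F}$ contains $\Re\int_\Omega(E\times B_t)\cdot\ol{E_t}$ and $\int_\Omega B_t\cdot A(iE\times\ol{E})$, which are undefined for $B_t\in H_{-1}$, as your last paragraph concedes. Your remedy does not close this. Substituting the first equation, as in (\ref{en-rel6}), cancels part of these terms but leaves $2i\int_\Omega B_t\cdot(\partial_1E\times\partial_1\ol{E}+\partial_2E\times\partial_2\ol{E})$, and $\partial_jE\times\partial_j\ol{E}$ lies in $L_2$ but not in $H_1$. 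Regrouping as $\frac{d}{dt}\int_\Omega B\cdot A(iE\times\ol{E})$ only moves the difficulty onto $\nabla E_t\in H_{-1}$ paired with $\nabla B\in L_2$.

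The missing ingredient is the bound $\int_0^T\|B_t\|^2\,d\tau\le C$ from (\ref{B_t}). It comes from the strong damping in (\ref{exist10}) and is uniform for data in bounded sets of $\cH$ (cf. (\ref{en-rel8})). With it, $n_tE-iE\times B_t\in L_2(0,T;L_2)$. So $E_t$ solves a linear Schr\"odinger equation with the Hermitian form $a(t,u,v)$, forcing in $L_2(0,T;L_2)$ and datum $E_1\in\ol{H}$. Likewise $n$ and $B$ solve linear wave and plate equations whose forcings pair integrably with $n_t$ and $A^{-1}B_t$. The paper treats the nonlinear terms as given forcing and applies the linear theory (Lions--Magenes, Theorems 8.2 and 10.1). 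That directly yields both (\ref{t_cont}) and the equality (\ref{en-rel}), with no Galerkin limit needed.

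The same bound is needed in your Step~2 to get $\int_0^t\mathcal{F}^k\to\int_0^t\mathcal{F}$, via weak convergence of $B^k_t$ in $L_2(0,T;L_2)$ against strongly convergent $E^k$-factors. Step~2 also drops the dissipation. The identity only gives convergence of $\mathcal{E}^k(t)+\int_0^t\mathcal{D}^k$, which must be split using $\liminf_k\mathcal{E}^k(t)\ge\mathcal{E}(t)$ and $\liminf_k\int_0^t\mathcal{D}^k\ge\int_0^t\mathcal{D}$, as the paper does.

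With these repairs, two of your choices are acceptable alternatives: the quantitative stability estimate in place of weak-$*$ compactness, uniqueness and Aubin--Simon, and the $t_k\to t$ argument in place of Dini's theorem.
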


\section{The proof of existence theorem}
\noindent
Before proceeding to the proof, we recall the following classical embedding in two dimensional case
\begin{equation}
\label{embed}
H^{1-2/p}(\Omega)\subset L_{p}(\Omega),
\quad 2<p<\infty.
\end{equation}
By interpolation, (\ref{embed}) implies that for an any function $u\in H^1_0\left(\Omega\right)$ we get
\begin{equation}
\label{L_p}
\|u\|_{L_p}\le C_p(\Omega)\|u\|^{2/p}\|\nabla u\|^{1-2/p},
\quad 2<p<\infty.
\end{equation}
In particular, for $p=4$ we recall the following Sobolev-type estimate
\begin{lemma}\label{le:1s}
For any $u\in H_1$ remains true
\begin{equation}\label{an-est}
\|u\|^4_{L_4(\Om)}\le 2\|u\|^2\cdot\|\nabla u\|^2.
\end{equation}
\end{lemma}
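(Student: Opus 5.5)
This is Ladyzhenskaya's inequality with constant $2$, and I would prove it for smooth compactly supported functions first. Take $u\in C_0^\infty(\Om)$ and extend it by zero to all of $\R^2$. The key identity is a one-dimensional bound on the square: for fixed $x_2$,
\[
u^2(x_1,x_2)=2\int_{-\infty}^{x_1}u\,\partial_1 u\,ds \le 2\int_{\R}|u|\,|\partial_1 u|\,dx_1 \equiv 2\phi_1(x_2).
\]
Similarly $u^2(x_1,x_2)\le 2\phi_2(x_1)$ with $\phi_2(x_1)=\int_\R |u||\partial_2 u|\,dx_2$. Multiplying these and integrating in $(x_1,x_2)$ gives
\[
\int_{\R^2}u^4\,dx\le 4\int_\R\phi_1(x_2)\,dx_2\int_\R\phi_2(x_1)\,dx_1
=4\,\||u||\partial_1u|\|_{L_1}\,\||u||\partial_2u|\|_{L_1}.
\]

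Next comes the constant. A naive use of Cauchy--Schwarz gives $4\|u\|^2\|\partial_1u\|\|\partial_2u\|$, and then $ab\le\frac12(a^2+b^2)$ gives $2\|u\|^2\|\nabla u\|^2$, which is the stated constant. The constant $2$ can be improved further by splitting each bound over $(-\infty,x_1)$ and $(x_1,\infty)$, but that refinement is not needed here.

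For complex-valued $u$, which is needed since $E$ is complex and vector valued, I would apply the same argument to $|u|$. Here $|u|\in H^1_0$ with $|\nabla|u||\le|\nabla u|$ almost everywhere, so the inequality transfers directly. Vector components are handled the same way, via $|u|^2=\sum|u_j|^2$ and the fact that $|\nabla |u||\le |\nabla u|$ holds pointwise for vectors as well.

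Finally, I would pass to general $u\in H_1=H^1_0(\Om)$ by density of $C_0^\infty(\Om)$. Convergence in $H^1$ implies convergence in $L_4$ by the embedding (\ref{embed}), so both sides of the inequality pass to the limit.

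The only delicate point is making sure the constant comes out exactly as $2$ rather than a generic $C$. That hinges on the splitting above, with the factor $4$ from the two one-dimensional bounds and $\frac12$ from AM--GM. Everything else is routine.
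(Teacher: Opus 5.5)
Your proof is correct, and it is Ladyzhenskaya's classical argument. The two one-sided bounds $|u|^2\le 2\phi_1(x_2)$ and $|u|^2\le 2\phi_2(x_1)$, followed by Fubini, Cauchy--Schwarz and $ab\le\frac12(a^2+b^2)$, give exactly the constant $4\cdot\frac12=2$. The paper gives no proof of its own here; it only refers to the appendix of \cite{ChuShch}, so your argument is a self-contained substitute for that citation. Your density step via the embedding (\ref{embed}) is also fine.

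Two small remarks:
\begin{itemize}
\item Your closing paragraph says the constant ``hinges on the splitting''. In fact the constant $2$ comes from the unrefined argument, as your own computation shows. The half-line splitting would improve it to $\frac12$, which is not needed here.
\item The detour through $|u|$ and $|\nabla|u||\le|\nabla u|$ is unnecessary. For smooth $\C^3$-valued $u$ the function $|u|^2$ is smooth, with $\partial_1|u|^2=2\Re(u\cdot\partial_1\ol{u})$. Hence $|u|^2\le 2\int_{\R}|u|\,|\partial_1 u|\,dx_1$ holds directly, and the rest of the argument is unchanged. If you do keep the $|u|$ route, run the line-by-line argument for smooth $u$, where $|u|$ is Lipschitz. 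Alternatively, do the density step for real functions before applying the inequality to $|u|\in H^1_0$. Otherwise your order of steps uses the inequality for non-smooth functions before you have proved it for them.
\end{itemize}
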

The proof of this lemma can be found in Appendix of~\cite{ChuShch}.
\par
Let us prove now Theorem~\ref{t:exist} by the compactness method.
We consider the Galerkin  approximations  of the magnetic Zakharov problem:
\begin{equation}
\label{exist_1a}
\left\{\begin{array}{l}
i E_t^N -A E^N - P_N(n^N E^N) + iP_N (E^N\times B^N) +i\gamma_1 E^N  =P_N g_1(x),
\\
n^N_{tt}+\gamma_2 n^N_t +A\left(n^N+P_N|E^N|^2\right)=P_N g_2(x),
\\
B^N_{tt}+\gamma_3A B^N_t +A^2\left(B^N+i P_N(E^N\times \ol{E^N})\right)=P_N g_3(x),
\end{array}\right.
\end{equation}
Here and below $P_N$ is the orthoprojector on Span$\{e_k\, :\, k=1,2,\ldots,N\}$, where $\{e_k\}$ are the eigenfunctions of $A$.
To simplify notations we omit below the index $N$ for functions $(n, B, E)$.
\par
We note that  the product $E\times \ol{E}$ is
purely imaginary.
Therefore 
$\Im \left( i(E\times B)\cdot \ol{E}\right )=\Re\left ((\ol{E}\times E)\cdot B\right )=0$.

In order to prove the initial a priori estimate,
let us multiply the first equation from (\ref{exist_1a}) by $2\ol{E}$
and integrate the imaginary part of the result over $\Omega$. Then
we have
\begin{equation}
\label{exist0}
\d\|E\|^2+2\gamma_1 \|E\|^2=2\Im(g_1, E)\le
2\|E\| \|g_1\|.
\end{equation}
If $\gamma_1=0$ then we obtain that $\d\|E\|\le \|g_1\|$. Hence,
\begin{equation}
\label{apr_est1_1}
\|E(t)\|\le \kappa\equiv \|E\|_0+T\|g_1\|.
\end{equation}
In the case $\gamma_1>0$ we can estimate the r.h.s of~(\ref{exist0}) such as 
\[
2\|E\|\|g_1\|\le \gamma_1\|E\|^2+\gamma_1^{-1}\|g_1\|^2.
\]
After integrating (\ref{exist0}) over $[0,t]$ we have the initial a priori estimate
\begin{eqnarray}
\label{apr_est1}
\|E(t)\|^2\le \|E_0\|^2 e^{-\gamma_1 t}+
\gamma_1^{-1}\int_0^t\|g_1(\tau)\|^2 e^{-\gamma_1 (t-\tau)}\mbox{d}\tau \nonumber
\\ 
\le  \max\left \{\|E_0\|^2, \gamma_1^{-2}\sup\limits_{t\in[0,T]}\|g_1(t)\|^2\right \}\equiv\kappa^2.
\end{eqnarray}

%==================================================================
%\centerline{$\ast\ast\ast$}
%==================================================================
Now we can proceed to prove the next a priori estimate.
Let us define now the functional
\begin{equation}
\label{G_1}
G_1(t)=\|\nabla E\|^2+\dfrac12\|n_t\|^2_{-1}+\dfrac12\|n\|^2
+\dfrac12\|B_t\|^2_{-2}+\dfrac12\|B\|^2+\int_{\Omega}n|E|^2
+i\int_{\Omega}(E\times\ol{E})\cdot B.
\end{equation}
Taking into account that
\begin{eqnarray*}
\d \int_{\Omega}n|E|^2=\int_{\Omega} n_t|E|^2+2\Re\int_{\Omega} nE\cdot \ol{E_t},\\
i\d \int_{\Omega}(E\times\ol{E})\cdot B= 
i\int_{\Omega}(E\times\ol{E})\cdot B_t-2\Re\int_{\Omega}i(E\times B)\cdot \ol{E_t},
\end{eqnarray*}
it is straightforward to check that
%
%Let us multiply
%
%
%(i) the first equation from (\ref{exist_1a}) by $-2\ol{E_t}-2\gamma_1 \ol{E}$
%and integrate the real part of the result over~$\Omega$;
%
%(ii) the second equation from (\ref{exist_1a}) by $A^{-1}n_t$
%and integrate the result over $\Omega$;
%
%(iii) the third equation from (\ref{exist_1a}) by $A^{-2}B_t$
%and integrate the result over $\Omega$;
%
%\noindent
%and  take the sum of these relations,
\begin{equation}
\label{sum1}
\d G_1(t)+2\gamma_1\|\nabla E\|^2+\gamma_2\|n_t\|_{-1}^2
+\gamma_3\|B_t\|_{-1}^2=R_0(t), 
\end{equation}
where
\begin{equation}
\label{R_0}
R_0(t)=2\gamma_1\int_{\Omega}(i E\times B-n E)\cdot \ol{E}
-2\Re\int_{\Omega}g_1\cdot\ol{E_t}
-2\gamma_1\Re\int_{\Omega}g_1\cdot\ol{E}+\int_{\Omega}g_2A^{-1}n_t
+\int_{\Omega}g_3\cdot A^{-2}B_t.
\end{equation}
By integration from (\ref{sum1}) we have that
\begin{equation}
\label{sum1_1}
G_1(t)\le G_1(0)+\int_{0}^{t} R_0(\tau) d\tau.
\end{equation}
Now we estimate from below the functional $G_1(t)$. From H\"{o}lder inequality, (\ref{an-est}) and (\ref{apr_est1}) we get
\begin{equation}
\label{exist2}
\left |\int_{\Omega}n|E|^2
+i\int_{\Omega}(E\times\ol{E})\cdot B\right|
\le \left (\|n\|+\|B\|\right )\|E\|^2_{L_4}\le\sqrt{2}\kappa (\|n\|+\|B\|)\|\nabla E\|.
\end{equation}
We note that if $\kappa<\dfrac{1}{\sqrt{2}}$ then there exists $\alpha>0$ such that
\[
\|\nabla E\|^2+\dfrac12\|n\|^2 +\dfrac12\|B\|^2-\sqrt{2}\kappa (\|n\|+\|B\|)\|\nabla E\|\ge \alpha \left (\|\nabla E\|^2+\|n\|^2 +\|B\|^2\right ).
\]
Therefore from (\ref{G_1}) we can conclude that
\begin{equation}
\label{exist5}
G_1(t)\ge
\alpha\left (\|\nabla E\|^2+\|n\|^2+\|B\|^2\right )+\dfrac{1}{2}\|n_t\|^2_{-1}+\dfrac{1}{2}\|B_t\|^2_{-2}\equiv H_0(t).
\end{equation}
Now we need to estimate the r.h.s of (\ref{sum1_1}) by $C_1+C_2\int_0^t H_0(\tau)d\tau$. 
Then from (\ref{sum1_1}), (\ref{exist5}) we get
\[
H_0(t)\le C_1+C_2\int_0^t H_0(\tau)d\tau.
\]
From Gronwall's lemma we obtain our second a priori estimate
\begin{equation}
\label{apr_est2}
\|\nabla E\|^2+\|n_t\|^2_{-1}+\|n\|^2+\|B_t\|^2_{-2}+\|B\|^2\le C, \quad
\forall t\in [0,T],
\end{equation}
where $C$ does not depend on $N$.
%We recall that 
%\[
%\left |\int_{\Omega}n|E|^2+i\int_{\Omega}(E\times\ol{E})\cdot B\right|\le \sqrt{2}\kappa (\|n\|+\|B\|)\|\nabla E\|.
%\]

%First of all from (\ref{an-est}) and (\ref{apr_est1}) it follows that
%\begin{equation}
%\label{exist2}
%\begin{array}{rcl}
%\left|\int_{\Omega}(i E\times B-n E)\cdot \ol{E}\right|
%& \le & (\|n\|+\|B\|)\|E\|_{L_4}^2\le \sqrt{2}\kappa (\|n\|+\|B\|)\|\nabla E\|.
%\end{array}
%\end{equation}
To do this, let us estimate $R_0(t)$. 
From the first equation of (\ref{exist_1a}) it follows that 
$E_t=-i AE- i P_N(n E)- P_N(E\times B)-\gamma_1 E-iP_Ng_1$. 
Substituting it into
$\int_{\Omega}g_1\cdot\ol{E_t}$ we get
\begin{equation}
\label{exist3}
\begin{array}{rcl}
\left|\Re\int\limits_{\Omega}g_1\cdot\ol{E_t}\right|
&
\le
&
\|\nabla g_1\|\|\nabla E\|
+(\|n\|+\|B\|)\|E\|_{L_4}\|g_1\|_{L_4}+\gamma_1\|g_1\|\|E\|
\\
& &
\le C(\|n\|+\|B\|+\|\nabla E\|^{1/2})\|\nabla E\|^{1/2}
\le \eps H_0(t)+C_\eps.
\end{array}
\end{equation}
Taking into account that
\[
\left |2\gamma_1\Re\int_{\Omega}g_1\cdot\ol{E}+\int_{\Omega}g_2A^{-1}n_t +\int_{\Omega}g_3\cdot A^{-2}B_t\right |\le 
C\left (1+\|n_t\|^2_{-1}+\|B_t\|^2_{-2}\right ),
\]
from (\ref{exist2}) and (\ref{exist3}) we can conclude the estimation
\[
R_0\le C_1+C_2 H_0(t),
\]
which is enough for us.

%==================================================================
%\centerline{$\ast\ast\ast$}
%==================================================================

Now we are ready to proceed to prove the main a priori estimate.
Let us multiply
the first equation from (\ref{exist_1a}) by $-2A\ol{E_t}-2\gamma_1 A\ol{E}$
and integrate the real part of the result over~$\Omega$
\begin{eqnarray}
\label{exist6}
\d\left\{\|AE\|^2+2\Re\int_\Omega g_1\cdot A\ol{E}\right\}+2\gamma_1\|AE\|^2
+2\Re\int_\Omega n E\cdot A\ol{E_t}-2\Re\int_\Omega i(E\times B)\cdot A\ol{E_t}
\nonumber \\
%& & \qquad \qquad \qquad \qquad
=2\gamma_1\Re\int_\Omega (i E\times B-n E-g_1)\cdot A\ol{E}.
%-2\gamma_1\Re\int_\Omega g_{1}\cdot A\ol{E}.%-2\gamma_1\Re\int_\Omega n E\cdot A\ol{E}
\end{eqnarray}
Let us multiply the second and the third equation from (\ref{exist_1a}) by $n_t$
and by $A^{-1}B_t$ correspondingly, then let us add and integrate the result over~$\Omega$:
\begin{eqnarray}
\label{exist7}
\lefteqn{
\dfrac12\d\left\{\|\nabla n\|^2+\|n_t\|^2+\|\nabla B\|^2+\|B_t\|^2_{-1}\right\}
+\gamma_2\|n_t\|^2+\gamma_3\|B_t\|^2
+\int_\Omega n_t A |E|^2
} \nonumber \\
& & \qquad
+\int_\Omega i A(E\times \ol{E})\cdot B_t
=\int_\Omega n_t P_N g_2+\int_\Omega P_Ng_3\cdot A^{-1}B_t.
\end{eqnarray}
Now we define the functional $H_1(t)$ such that
\begin{equation}
\label{H_1}
H_1(t)=\|AE\|^2+\dfrac12\|\nabla n\|^2+\dfrac12\|n_t\|^2+\dfrac12\|\nabla B\|^2+\dfrac12\|B_t\|^2_{-1}.
\end{equation}
Taking into account (\ref{H_1}) by adding (\ref{exist6}) and (\ref{exist7}) we get
\begin{eqnarray}
\label{exist8}
\lefteqn{
\d\left\{ H_1(t)+2\Re\int_\Omega g_1\cdot A\ol{E}\right\}
+2\gamma_1\|AE\|^2+\gamma_2\|n_t\|^2+\gamma_3\|B_t\|^2+2\Re\int_\Omega n E\cdot A \ol{E_t}
} \nonumber \\
& & \qquad
-2\Re\int_\Omega i(E\times B)\cdot A\ol{E_t}
+\int_\Omega n_t A |E|^2+\int_\Omega i A (E\times \ol{E})\cdot B_t
=R_1(t),
\end{eqnarray}
where
\begin{eqnarray}
\label{R_1}
%\lefteqn{ 
R_1(t)  =  2\gamma_1\Re\int_\Omega (i E\times B-n E-g_1)\cdot A\ol{E} +\int_\Omega n_t P_N g_2+\int_\Omega P_Ng_3\cdot A^{-1}B_t.
%} 
%\nonumber \\
%%& & \qquad
\end{eqnarray}
It is straightforward to check that
\begin{eqnarray}
\label{exist9}
\lefteqn{
\int_\Omega n_t A |E|^2+2\Re\int_\Omega n E\cdot A \ol{E_t}
-2\Re\int_\Omega i(E\times B)\cdot A\ol{E_t}
+\int_\Omega i A(E\times \ol{E})\cdot B_t
} \nonumber \\
& & \qquad
=\d\left\{2\Re\int_\Omega n E\cdot A\ol{E}+\|P_N(n E)\|^2 
-2\Re\int_\Omega i(E\times B)\cdot A \ol{E}+\|P_N(E\times B)\|^2
\right.
\nonumber \\
& & \qquad \qquad
\left.-2\Re\int_\Omega iP_N(E\times B)\cdot (n\ol{E})\right\}-R_2(t),
\end{eqnarray}
where
\begin{eqnarray}
\label{R_2}
\lefteqn{ R_2(t)  =  2\int_\Omega n_t |\nabla E|^2
+2\int_\Omega i(\partial_1 E\times \partial_1 \ol{E}+\partial_2 E\times \partial_2 \ol{E})\cdot B_t
} \nonumber \\
& & \qquad
+2\Re\int_\Omega P_N(n E-iE\times B)\cdot (n_t\ol{E}+i\ol{E}\times B_t)
\nonumber \\
& & \qquad \qquad
+2\Re\int_\Omega P_N(n \ol{E_t}+i\ol{E_t}\times B)\cdot (i\gamma_1 E-P_Ng_1).
\end{eqnarray}
Let us define now the functional
\begin{eqnarray}
\label{H_2}
%\lefteqn{
H_2(t)=2\Re\int_\Omega (n E+g_1)\cdot A\ol{E}+\|P_N(n E)\|^2 
-2\Re\int_\Omega i(E\times B)\cdot A \ol{E}+\|P_N(E\times B)\|^2
%} 
%\nonumber \\
%%& & \qquad \qquad
%+2\Re\int_\Omega g_1\cdot A\ol{E}.
\end{eqnarray}
From (\ref{exist8}), (\ref{exist9}) and (\ref{H_2}) it follows that
\[
\d\left\{H_1(t)+H_2(t)\right\}
+2\gamma_1\|AE\|^2+\gamma_2\|n_t\|^2+\gamma_3\|B_t\|^2
=R_1(t)+R_2(t).
\]
Integrating this equality over $[0, T]$ yields
\begin{eqnarray}
\label{sum2_1}
H_1(t)+H_2(t)+2\gamma_1\int_0^t\|AE(\tau)\|^2d\tau+\gamma_2\int_0^t\|n_t(\tau)\|^2d\tau+\gamma_3\int_0^t\|B_t(\tau)\|^2d\tau \nonumber
\\
=H_1(0)+H_2(0)+\int_0^t \left (R_1(\tau)+R_2(\tau)\right )d\tau.
\end{eqnarray}
At first we estimate $H_2$ as
\begin{eqnarray}
\label{H_2_est}
\lefteqn{
|H_2(t)|  \le C\|A E\|\|E\|_{L_4}(1+\|n\|_{L_4}+\|B\|_{L_4})
+C\|E\|^2_{L_4}(\|n\|_{L_4}+\|B\|_{L_4})^2
} \nonumber \\
& & \qquad \qquad
\le C\|A E\|(1+\|\nabla n\|^{1/2}+\|\nabla B\|^{1/2})+C\|\nabla n\|+C\|\nabla B\|\nonumber 
\\
& & \qquad \qquad\qquad
\le \delta H_1(t)+C_\delta,
\end{eqnarray}
where $\delta>0$ is arbitrary.
Now we estimate the terms in the r.h.s of (\ref{sum2_1}). We note that (\ref{L_p}) and
(\ref{apr_est2}) imply that $\|E\|_{L_p}\le C$ for all $2\le p<\infty$. Therefore,
choosing sufficiently small $\eps$,
from (\ref{R_1}), (\ref{L_p}) and (\ref{apr_est2}) we get
\begin{equation}
\label{R_1_est}
\begin{array}{rcl}
|R_1(t)| & \le & C\|\Delta E\|\|E\|_{L_4}(\|n\|_{L_4}+\|B\|_{L_4})
+C\|\Delta E\|+C\|n_t\|+C\|B_t\|
\\
& \le &
C\|\Delta E\|(1+\|\nabla n\|^{1/2}+\|\nabla B\|^{1/2})+C\|n_t\|+C\|B_t\|
\\
& \le &
C H_1(t)+\varepsilon\|B_t\|^2+C_\eps.
\end{array}
\end{equation}
%===============
In the same way we obtain
\begin{eqnarray}
\label{R_2_est}
\lefteqn{
|R_2(t)|  \le C\|\nabla E\|^2_{L_4}(\|n_t\|+\|B_t\|)
+C(\|n_t\|+\|B_t\|)\|E\|_{L_4}\|E\|_{L_8}(\|n\|_{L_8}+\|B\|_{L_8})
} \nonumber \\
& & \qquad
+C\|\Delta E- n E + iP_N (E\times B) +i\gamma_1 E-P_N g_1(x,t)\|(\|n\|_{L_4}+\|B\|_{L_4})
\nonumber \\
& & \qquad \qquad
\le C(\|\Delta E\|^2+\|n_t\|^2)+\eps\|B_t\|^2+C(\|\nabla n\|^{3/2}+\|\nabla B\|^{3/2})+C
 \\
& & \qquad \qquad\qquad
\le C H_1(t)+\varepsilon\|B_t\|^2+C.
\nonumber
\end{eqnarray}
%and for every $\delta>0$
%\begin{eqnarray}
%\label{H_2_est}
%\lefteqn{
%|H_2(t)|  \le C\|\Delta E\|\|E\|_{L_4}(1+\|n\|_{L_4}+\|B\|_{L_4})
%+C\|E\|^2_{L_4}(\|n\|_{L_4}+\|B\|_{L_4})^2
%} \nonumber \\
%& & \qquad \qquad
%\le C\|\Delta E\|(1+\|\nabla n\|^{1/2}+\|\nabla B\|^{1/2})+C\|\nabla n\|+C\|\nabla B\|
%\\
%& & \qquad \qquad\qquad
%\le \delta H_1(t)+C_\delta.
%\nonumber
%\end{eqnarray}
From (\ref{sum2_1}), (\ref{R_1_est}), (\ref{H_2_est}) and (\ref{R_2_est}) we get
\begin{eqnarray}
\label{exist10}
(1-\delta)H_1(t)+(\gamma_3-2\varepsilon)\int_0^t\|B_t(\tau)\|^2d\tau
\le C_1+C_2\int_0^t H_1(\tau) d\tau.
\end{eqnarray}
From Gronwall's lemma and this relation we conclude the final a priori estimate
\begin{equation}
\label{apr_est3}
\|AE\|^2+\dfrac12\|\nabla n\|^2+\dfrac12\|n_t\|^2+\dfrac12\|\nabla B\|^2+\dfrac12\|B_t\|_{-1}^2
\le C, \quad \forall t\in [0,T].
\end{equation}
Furthermore, we note that (\ref{exist10}) and (\ref{apr_est3}) imply an additional a priori estimate.
\begin{equation}
\label{B_t}
\int\limits_0^T \|B_t(\tau)\|^2 d\tau\le C.
\end{equation}
Taking into account (\ref{apr_est3}) we can pass to the limit as $N\to\infty$  in (\ref{exist_1a}) and obtain the existence of semi-strong solution on $[0, T]$.

\section{The proof of uniqueness theorem}

We use the method suggested by
Sedenko~\cite{Sedenko_1} in the theory of elastic  shells
(see also \cite{Chueshov, ChuShch}).  Bellow we need the following lemma, where
the orthoprojector $P_N$ is the same as in (\ref{exist_1a}).
\begin{lemma}\label{lemma_ineq}
(a)~Let $u(x)\in H^1_0(\Omega)$. Then there exists $N_0>0$ such that
\begin{equation}\label{p-N}
\max\limits_{x \in \Omega}|(P_N u)(x)|\le
C\left[\log(1+\lambda_N)\right]^{1/2}\|u\|_{H^1}
\end{equation}
for all $N>N_0$, where constant $C$ does not depend on $N$.
\par
(b)~Let $u\in H^{s}(\Omega)$ and
$v\in H^{1}(\Omega)\bigcap L_\infty(\Omega)$,
where $0<s<1$. Then
\begin{equation}\label{prod}
 \|u v\|_{H^s}\le C\|u\|_{H^s}\left(\|v\|_{L_\infty}+\|v\|_{H^1}\right).
\end{equation}
(c)~Let $u\in H^{-s}(\Omega)$ and
$v\in H^{1}(\Omega)\bigcap L_\infty(\Omega)$,
where $0<s<1$. Then
\begin{equation}\label{prod2}
 \|u v\|_{H^{-s}}\le C\|u\|_{H^{-s}}\left(\|v\|_{L_\infty}+\|v\|_{H^1}\right).
\end{equation}
\end{lemma}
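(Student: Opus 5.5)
Part (a) will be proved by expanding in eigenfunctions. Write $P_N u=\sum_{k\le N} c_k e_k$ with $c_k=(u,e_k)$, so that $\|u\|_{H^1}^2\sim\sum_k\lambda_k|c_k|^2$. For fixed $x\in\Omega$, Cauchy--Schwarz gives
\[
|(P_Nu)(x)|^2\le\Big(\sum_{k\le N}\lambda_k|c_k|^2\Big)\Big(\sum_{k\le N}\lambda_k^{-1}|e_k(x)|^2\Big).
\]
The task then reduces to a uniform bound $\sum_{k\le N}\lambda_k^{-1}|e_k(x)|^2\le C\log(1+\lambda_N)$. To get it, I will write the sum as a Stieltjes integral $\int_{\lambda_1}^{\lambda_N}\mu^{-1}\,d e(x,x,\mu)$ against the spectral function $e(x,x,\mu)=\sum_{\lambda_k\le\mu}|e_k(x)|^2$. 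In 2D, the local Weyl law (Hörmander, or the Avakumović--Levitan bound) gives $e(x,x,\mu)\le C\mu$ uniformly in $x$ for Dirichlet eigenfunctions on a smooth bounded domain. Integrating by parts then yields
\[
\lambda_N^{-1}e(x,x,\lambda_N)+\int_{\lambda_1}^{\lambda_N}\mu^{-2}e(x,x,\mu)\,d\mu\le C+C\log(\lambda_N/\lambda_1),
\]
which is bounded by $C\log(1+\lambda_N)$ once $N>N_0$, where $N_0$ absorbs the constant.

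The uniform-in-$x$ spectral function bound is the main obstacle near $\partial\Omega$. If I want to avoid citing a boundary Weyl law, I will try a heat-kernel substitute instead. Since $\lambda_k^{-1}\le e\,t^{-1}e^{-t\lambda_k}$ is false in general, I will replace it by the estimate
\[
\sum_{k\le N}\lambda_k^{-1}|e_k(x)|^2\le\int_{\lambda_N^{-1}}^{1}\sum_k e^{-t\lambda_k}|e_k(x)|^2\,dt\cdot C+C,
\]
and then use the Dirichlet heat kernel bound $p_t(x,x)\le (4\pi t)^{-1}$, which holds for any domain by domain monotonicity. This gives $\int_{1/\lambda_N}^1 t^{-1}dt=\log\lambda_N$. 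I prefer this route because it is clean and uniform in $x$. The step justifying the replacement is the inequality $\int_{1/\lambda_N}^1 e^{-t\lambda}\,dt\ge c\lambda^{-1}$ for $\lambda_1\le\lambda\le\lambda_N$, which is elementary.

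Part (b) is a fractional Leibniz estimate, which I will prove by interpolation. The map $u\mapsto uv$ is bounded on $L_2$ with norm $\le\|v\|_{L_\infty}$. On $H^1$, we have $\nabla(uv)=v\nabla u+u\nabla v$. The bilinear term $u\nabla v$ is not bounded in $L_2$ for $u\in H^1$, $v\in H^1\cap L_\infty$ via $L_\infty$, but it is via $\|u\|_{L_p}\|\nabla v\|_{L_{2}}$ only if $p=\infty$. So I will instead interpolate bilinearly. Multiplication by $v$ is bounded $L_2\to L_2$ and $H^1\to W^{1,q}$ for each $q<2$, using $\|u\nabla v\|_{L_q}\le\|u\|_{L_{2q/(2-q)}}\|\nabla v\|$ and the embedding (\ref{embed}). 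Complex interpolation gives $H^s\to W^{s,r}$ with $r<2$ slightly. This loses integrability, so I will repair it with the direct Gagliardo seminorm. For $0<s<1$, use
\[
|u(x)v(x)-u(y)v(y)|\le\|v\|_{L_\infty}|u(x)-u(y)|+|u(y)||v(x)-v(y)|.
\]
The second term is controlled by $\iint|u(y)|^2|v(x)-v(y)|^2|x-y|^{-2-2s}\,dx\,dy$. I will bound this using $\int|v(x)-v(y)|^2|x-y|^{-2-2s}dx\le C M(|\nabla v|^2)(y)^{s}\|v\|_{L_\infty}^{2-2s}$-type pointwise bounds, or more simply by Hölder with $u\in L_{p}$, $p=2/(1-s)$ (from (\ref{embed})), together with $v\in W^{s,2/s}$, which holds by Gagliardo--Nirenberg interpolation between $L_\infty$ and $H^1$: $\|v\|_{W^{s,2/s}}\le C\|v\|_{L_\infty}^{1-s}\|v\|_{H^1}^{s}$. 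The Hölder step is really a Kato--Ponce product estimate $\|uv\|_{H^s}\lesssim\|u\|_{H^s}\|v\|_{L_\infty}+\|u\|_{L_{2/(1-s)}}\|v\|_{W^{s,2/s}}$, which I will cite for smooth extensions to $\R^2$ via an extension operator.

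Part (c) follows from (b) by duality. Since $v$ is real, or after replacing it by $\ol v$,
\[
\|uv\|_{H^{-s}}=\sup_{\|w\|_{H^s_0}\le1}|(u,\ol v w)|\le\|u\|_{H^{-s}}\sup\|vw\|_{H^s}.
\]
Applying (b) to $vw$ gives the claim, provided multiplication by $v$ preserves the relevant zero-trace subspace; this is automatic for $0<s<1/2$ and holds for $s\ge1/2$ because $vw$ inherits the vanishing trace of $w$.
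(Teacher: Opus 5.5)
Your proposal is correct in substance, with one correction in (b) given below. For (a) and (b) it takes a different route from the paper, whose proof of these two parts is only a pointer. The paper refers (a) to Chueshov's lemma for the spectral projector of the biharmonic operator, and says (b) follows from the description of fractional Sobolev spaces together with $H^1(\Omega)\subset L_p(\Omega)$.

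Your heat-kernel proof of (a) is complete and self-contained. One has $\int_{1/\lambda_N}^1 e^{-t\lambda}\,dt=\lambda^{-1}(e^{-\lambda/\lambda_N}-e^{-\lambda})\ge c(\lambda_1)\lambda^{-1}$ for $\lambda\in[\lambda_1,\lambda_N]$ once $\lambda_N\ge 2$; the bracket is unimodal in $\lambda$, so checking the endpoints suffices. Then $p_t(x,x)\le(4\pi t)^{-1}$ gives $\sum_{k\le N}\lambda_k^{-1}|e_k(x)|^2\le C\log\lambda_N$ uniformly up to $\partial\Omega$. No additive constant is needed, and $N_0$ is just the first index with $\lambda_N\ge 2$. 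The same bound gives $e(x,x,\mu)\le \mathrm{e}\,p_{1/\mu}(x,x)\le C\mu$, so your first route needs no boundary Weyl law either. Compared with the citation, you get a complete argument for the operator actually used here, rather than an adaptation of the biharmonic case left to the reader.

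For (b), your Kato--Ponce plus Gagliardo--Nirenberg route makes explicit what the paper's hint leaves implicit: $\|v\|_{L_\infty}$ must be interpolated against $\|v\|_{H^1}$ to control the term where the fractional derivative falls on $v$. The cost is heavier harmonic-analysis machinery and an extension operator bounded simultaneously on $H^1$ and $L_\infty$.

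Part (c) is the paper's duality argument. Your observation that $vw$ must stay in $H_s=D(A^{s/2})$ is a point the paper skips. It follows from (b) and density of $C_0^\infty(\Omega)$. For $s=1/2$, where $H_{1/2}$ is the Lions--Magenes space, also use $\int_\Omega|vw|^2/\mathrm{dist}(x,\partial\Omega)\le\|v\|_{L_\infty}^2\int_\Omega|w|^2/\mathrm{dist}(x,\partial\Omega)$.

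The correction concerns the norm on $v$ in your product estimate. It must be the Bessel-potential norm $\|(1-\Delta)^{s/2}v\|_{L_{2/s}}$ (after extension to $\R^2$), not the Slobodeckij norm of $W^{s,2/s}$, which is the natural reading next to your Gagliardo-seminorm computation. The Bessel-potential norm is also what your H\"older step actually produces. Pairing $|u|^2\in L_{1/(1-s)}$ with $\Phi(y)=\int|v(x)-v(y)|^2|x-y|^{-2-2s}\,dx$ in $L_{1/s}$ leaves the square-function quantity $\|\Phi^{1/2}\|_{L_{2/s}}$, which is equivalent to the homogeneous Bessel-potential seminorm.

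Since $2/s>2$, the Slobodeckij space is strictly larger, and the estimate as written fails. Take $v=\chi(x)\sum_{j\ge1}2^{-js}j^{-1/2}e^{i2^{j}x_1}$ with $\chi\in C_0^\infty$, and $u\in C_0^\infty$ equal to $1$ on $\mathrm{supp}\,v$. Then $v\in L_\infty\cap W^{s,2/s}$, but $\|uv\|_{H^s}=\|v\|_{H^s}=\infty$.

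The interpolation step does hold in the Bessel scale. For the Littlewood--Paley pieces, $|v_j|\lesssim\min\{\|v\|_{L_\infty},\,2^{-j}M|\nabla v|\}$, so $(\sum_j 2^{2js}|v_j|^2)^{1/2}\lesssim\|v\|_{L_\infty}^{1-s}(M|\nabla v|)^s$ pointwise. The $L_2$-boundedness of $M$ then gives $\|(1-\Delta)^{s/2}v\|_{L_{2/s}}\lesssim\|v\|_{L_\infty}^{1-s}\|v\|_{H^1}^s$.

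Your alternative pointwise bound via $M(|\nabla v|^2)^s$ does not close. After H\"older it needs $M(|\nabla v|^2)\in L_1$, and the maximal operator is not bounded on $L_1$. With the Bessel-potential norm in place of $W^{s,2/s}$, your proof of (b) goes through.
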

\begin{proof}
To obtain (\ref{p-N}) one can use the same argument as in
\cite[Lemma 2.2]{Chueshov},
where this inequality was proved for the
spectral projector $P_N$ related with the biharmonic operator.
\par
As for relation (\ref{prod}) it follows from the standard description
of fractional Sobolev spaces (see, e.g.,\ \cite{Lions}) and from the
embedding
$H^1(\Om)\subset L_p(\Om)$ for all $1\le p<\infty$.
\par
Let us prove (\ref{prod2}). From the definition $\|\cdot\|_{H^{-s}}$ and
(\ref{prod}) we get
\[
\|u v\|_{H^{-s}}=\sup\limits_{\|\varphi\|_{H^s}=1}\left|\int_\Omega uv \varphi\right|
\le \|u\|_{H^{-s}}\|v \varphi\|_{H^s}\le
C\|u\|_{H^{-s}}\left(\|v\|_{L_\infty}+\|v\|_{H^1}\right).
\]
\end{proof}

\par
Let $(E^{(i)}), n^{(i)}; B^{(i)})$, $i=1,2$, be two solutions to
problem (\ref{Zakh_magnetic})--(\ref{bound_cond}). We set $E=E^{(1)}-E^{(2)}$, $n=n^{(1)}-n^{(2)}$ and
$B=B^{(1)}-B^{(2)}$. Then the triple  $(E; n; B)$ solves the system
\begin{equation}
\label{uniq_1}
\left\{\begin{array}{l}
iE_t-AE=G_1(t)\equiv n^{(1)}E+nE^{(2)}-iE^{(1)}\times B-i E\times B^{(2)}+i\gamma_1E,
\\
n_{tt}+A n=G_2(t)\equiv -A\left(E^{(1)}\cdot \ol{E}+E\cdot \ol{E^{(2)}}\right)-\gamma_2n_t,
\\
B_{tt}+\gamma_3A B_t+A^2B=G_3(t)\equiv-iA^2\left(E^{(1)}\times \ol{E}+E\times \ol{E^{(2)}}\right).
\end{array}\right.
\end{equation}
with zero initial data.
\par
Let us consider the solution $(E^N,n^N,B^N)$ of the linear finite-dimensional problem
\begin{equation}
\label{uniq_1_N}
\left\{\begin{array}{l}
iE^N_{tt}-AE^N_t=P_NG_{1t}(t),
\\
iE^N_t-AE^N=P_NG_1(t),
\\
n^N_{tt}+A n^N=P_NG_2(t),
\\
B^N_{tt}+\gamma_3AB^N_t+A^2B^N=P_NG_3(t).
\end{array}\right.
\end{equation}
with zero initial data.
\par
Multiplying the first two equations in (\ref{uniq_1_N}) by $2A^{-s}E^N_t$ and $2A^{1-s}E^N$
in $\ol{H_0(\Omega)}$ correspondingly and taking the imaginary part it can easily be seen that
\begin{eqnarray}
\d \left\{\|E^N_t\|^2_{-s}+\|E^N\|^2_{1-s}\right\}
\le 2\left(\|P_NG_{1,t}\|_{-s}\|E^N_t\|_{-s}+\|P_NG_{1}\|_{1-s}\|E^N\|_{1-s}\right).
\nonumber
\end{eqnarray}
Since $P_N\to I$ strongly in every space $H_\sigma$, this implies that
\begin{eqnarray}
\label{uniq_2_1}
\|E_t(t)\|^2_{-s}+\|E(t)\|^2_{1-s}
\le 2\int_0^t\left(\|G_{1,t}(\tau)\|_{-s}\|E_t(\tau)\|_{-s}+\|G_{1}(\tau)\|_{1-s}\|E(\tau)\|_{1-s}\right) d\tau.
\end{eqnarray}
\par
Similarly, multiplying the last two equations in (\ref{uniq_1_N}) by
$2A^{-s}n^N_t$ and $2A^{-1-s}B^N_t$ in $H_0(\Omega)$  correspondingly, we obtain
\begin{eqnarray}
%\label{uniq_2}
\lefteqn{
\d \left\{\|n^N_t\|^2_{-s}+\|n^N\|^2_{1-s}
+\|B^N_t\|^2_{-1-s}+\|B^N\|^2_{1-s}\right\}+2\gamma_3\|B^N_t\|^2_{-s}
} \nonumber \\
& & \qquad
\le 2\left(\|P_NG_{2}\|_{-s}\|n^N_t\|_{-s}+\|P_NG_{3}\|_{-2-s}\|B^N_t\|_{-s}
\right).
\nonumber
\end{eqnarray}
After integration with respect to $t$ and limiting transition $N\to \infty$ we obtain that
\begin{eqnarray}
\label{uniq_2_2}
\lefteqn{
\|n_t(t)\|^2_{-s}+\|n(t)\|^2_{1-s}
+\|B_t(t)\|^2_{-1-s}+\|B(t)\|^2_{1-s}+2\gamma_3\int_0^t\|B_t(\tau)\|^2_{-s} d\tau
} \nonumber \\
& & \qquad
\le 2\int_0^t\left(\|G_{2}(\tau)\|_{-s}\|n_t(\tau)\|_{-s}+\|G_{3}(\tau)\|_{-2-s}\|B_t(\tau)\|_{-s}\right) d\tau.
\end{eqnarray}

\subsection*{Estimate for $\| G_{1,t}\|_{-s}$.}
From definition of $G_1(t)$ we get
\begin{eqnarray}
\label{G_1t}
\lefteqn{
\|G_{1t}\|_{-s} \le \|n^{(1)}_tE\|_{-s}+\|nE^{(2)}_t\|_{-s}+\|E^{(1)}_t\times B\|_{-s}
+ \|E\times B^{(2)}_t\|_{-s}
} \nonumber \\
& & \qquad
+\|n_tE^{(2)}\|_{-s}+\|E^{(1)}\times B_t\|_{-s}+\|n^{(1)}E_t\|_{-s}+\|E_t\times B^{(2)}\|_{-s}
+\gamma_1\|E_t\|_{-s}.
\end{eqnarray}
First, we recall that, for every $0<s<1$, the following embeddings hold on a bounded two-dimensional domain $\Omega$:
\begin{equation}
\label{embedd_s}
L_{2/(1+s)}(\Omega)\subset H^{-s}(\Omega) \mbox{ and } H^{1-s}(\Omega)\subset L_{2/s}(\Omega).
\end{equation}
Since H?lder's inequality with $p=s+1$ and $q=(s+1)/s$ implies that
\begin{equation}
\label{est-s}
\|u\cdot v\|_{-s}\le C\|u\cdot v\|_{L_{2/(s+1)}}\le C\|u\| \|v\|_{L_{2/s}}
\le C\|u\|\|v\|_{1-s}.
\end{equation}
Therefore, (\ref{est-s}) allows to estimate the first four terms in the r.h.s of (\ref{G_1t}) as
\begin{eqnarray}
\label{G_1t_est1}
\lefteqn{
\|n^{(1)}_tE\|_{-s}+\|nE^{(2)}_t\|_{-s}+\|E^{(1)}_t\times B\|_{-s}
+ \|E\times B^{(2)}_t\|_{-s}
} \nonumber \\
& & \qquad
\le C \left(\|n^{(1)}_t\| \|E\|_{1-s}+\|E^{(2)}_t\|\|n\|_{1-s}+\|E^{(1)}_t\|\|B\|_{1-s}
+ \|B^{(2)}_t\|\|E\|_{1-s}\right).
\end{eqnarray}
At second, using (\ref{apr_est3}) and (\ref{prod2}) we estimate the next two term
in the r.h.s of (\ref{G_1t}) as
\begin{equation}
\label{G_1t_est2}
\|n_tE^{(2)}\|_{-s}+\|E^{(1)}\times B_t\|_{-s}
\le C(\|n_t\|_{-s}+\|B_t\|_{-s}).
\end{equation}
Let us recall now that for $Q_N=I-P_N$ and for $0< s_1< s_2$ the following relation remains true
\begin{equation}
\label{Q_N}
\|Q_N u\|_{s_1}\le C\lambda_{N+1}^{-(s_2-s_1)/2}\|Q_N u\|_{s_2},
\quad \forall u\in H^{s_2}(\Omega).
\end{equation}
Then, taking into account (\ref{apr_est3}), (\ref{prod2}), (\ref{est-s}) and (\ref{Q_N}), we estimate the
last two nonlinear term in the  r.h.s of (\ref{G_1t}) as
\begin{eqnarray}
\label{G_1t_est3}
\lefteqn{
\|n^{(1)}E_t\|_{-s}+\|E_t\times B^{(2)}\|_{-s}
} \nonumber \\
& & \quad
\le \|P_N(n^{(1)})E_t\|_{-s}+\|P_N(B^{(2)})E_t\|_{-s}
+\|Q_N(n^{(1)})E_t\|_{-s}+\|Q_N(B^{(2)})E_t\|_{-s}
\nonumber \\
& & \quad\quad
\le C\|E_t\|_{-s}\left(1+\left[\log(1+\lambda_N)\right]^{1/2}\right)
+C\|E_t\|\left(\|Q_Nn^{(1)}\|_{1-s}+\|Q_NB^{(2)}\|_{1-s}\right)
 \\
& & \quad\quad\quad
\le
C\|E_t\|_{-s}\left(1+\left[\log(1+\lambda_N)\right]^{1/2}\right)
+C\lambda_{N+1}^{-s/2}.
\nonumber
\end{eqnarray}
\par
Thus from (\ref{G_1t}), (\ref{G_1t_est1}), (\ref{G_1t_est2}) and (\ref{G_1t_est3})
we obtain the finally estimate for $\|P_NG_{1,t}\|_{-s}$
\begin{eqnarray}
\label{G_1t_est}
\lefteqn{
\|P_NG_{1,t}\|_{-s}\le
C\|E_t\|_{-s}\left(1+\left[\log(1+\lambda_N)\right]^{1/2}\right)
+C(1+\|B_t^{(2)}\|)\|E\|_{1-s}
}\nonumber
\\
& & \qquad
+C\|B_t\|_{-s}+C(\|n\|_{1-s}+\|B\|_{1-s}+\|n_t\|_{-s})
+C\lambda_{N+1}^{-s/2}.
\end{eqnarray}

\subsection*{Estimate for $\|G_{1}\|_{1-s}$.}
From definition of $G_1(t)$ it follows that
\begin{eqnarray}
\label{P_NG_1}
\lefteqn{
\|G_{1}\|_{1-s} \le \|n^{(1)}E\|_{1-s}+\|nE^{(2)}\|_{1-s}+\|E^{(1)}\times B\|_{1-s}
}\nonumber
\\
& & \qquad\qquad
+ \|E\times B^{(2)}\|_{1-s}
+\gamma_1\|E\|_{1-s}.
\end{eqnarray}
It is obvious that
\begin{eqnarray}
\label{P_NG_1-est1}
\lefteqn{
\|n^{(1)}E\|_{1-s}+\|E\times B^{(2)}\|_{1-s}
\le
\|P_N(n^{(1)})E\|_{1-s}+\|E\times P_N(B^{(2)})\|_{1-s}
}\nonumber
\\
& & \qquad\qquad
+\|Q_N(n^{(1)})E\|_{1-s}+\|E\times Q_N(B^{(2)})\|_{1-s}.
\end{eqnarray}
By Lemma~(\ref{lemma_ineq}) we get
\begin{eqnarray}
\label{P_NG_1-est2}
\lefteqn{
\|P_N(n^{(1)})E\|_{1-s}+\|E\times P_N(B^{(2)})\|_{1-s}
\le
\|E\|_{1-s}\left(\max\limits_{x\in\Omega}|P_N(n^{(1)})|+\|P_N(n^{(1)})\|_{H^1}\right.
}\nonumber
\\
& & \qquad
\left.+\max\limits_{x\in\Omega}|P_N(B^{(2)})|+\|P_N(B^{(2)})\|_{H^1}\right)
\le C\|E\|_{1-s}\left(1+\left[\log(1+\lambda_N)\right]^{1/2}\right).
\end{eqnarray}
From Lemma~(\ref{lemma_ineq}) and (\ref{Q_N}) we obtain
\begin{eqnarray}
\label{P_NG_1-est3}
\lefteqn{
\|Q_N(n^{(1)})E\|_{1-s}+\|E\times Q_N(B^{(2)})\|_{1-s}
}\nonumber
\\
& & \qquad
\le
\left(\|Q_N(n^{(1)})\|_{1-s}+\|Q_N(B^{(2)})\|_{1-s}\right)\left(\max\limits_{x\in\Omega}|E|
+\|E\|_{H^1}\right)
\le C\lambda_{N+1}^{-s/2}.
\end{eqnarray}
Therefore, (\ref{P_NG_1-est1})-(\ref{P_NG_1-est3}) imply that
\begin{equation}
\label{P_NG_1-est4}
\|n^{(1)}E\|_{1-s}+\|E\times B^{(2)}\|_{1-s}
\le
C\|E\|_{1-s}\left(1+\left[\log(1+\lambda_N)\right]^{1/2}\right)
+C\lambda_{N+1}^{-s/2}.
\end{equation}
Then, using Lemma~(\ref{lemma_ineq}) we estimate the next two terms from  the right hand
side of (\ref{P_NG_1}) such as
\begin{eqnarray}
\label{P_NG_1-est5}
\lefteqn{
\|nE^{(2)}\|_{1-s}+\|E^{(1)}\times B\|_{1-s}
}\nonumber
\\
& & \qquad
\le
\|n\|_{1-s}\left(\max\limits_{x\in\Omega}|E^{(2)}|+\|E^{(2)}\|_{H^1}\right)
+\|B\|_{1-s}\left(\max\limits_{x\in\Omega}|E^{(1)}|+\|E^{(1)}\|_{H^1}\right)
\\
& & \qquad\qquad
\le C\left(\|n\|_{1-s}+\|B\|_{1-s}\right).
\nonumber
\end{eqnarray}
Thus, (\ref{P_NG_1-est4}) and (\ref{P_NG_1-est5}) imply the finally estimate for $\|G_1\|_{1-s}$
\begin{eqnarray}
\label{P_NG_1-est}
\lefteqn{
\|G_{1}\|_{1-s} \le
C\|E\|_{1-s}\left(1+\left[\log(1+\lambda_N)\right]^{1/2}\right)
}\nonumber
\\
& & \qquad\qquad
+C\left(\|n\|_{1-s}+\|B\|_{1-s}\right)+C\lambda_{N+1}^{-s/2}.
\end{eqnarray}

\subsection*{Estimate for $\|G_{2}\|_{-s}$.}
Definition of $G_2(t)$ implies that
\begin{eqnarray}
\label{P_NG_2}
\|G_{2}\|_{-s} \le
\|E^{(1)}\cdot \ol{E}\|_{2-s}+\|E\cdot \ol{E^{(2)}}\|_{2-s}+\gamma_2\|n_t\|_{-s}.
\end{eqnarray}
Using Lemma~(\ref{lemma_ineq}) and taking into account that the first equation of
(\ref{uniq_1}) and (\ref{est-s}) imply
\[
\|E\|_{2-s}\le \|E_t\|_{-s}+\|G_1(t)\|_{-s}
\le C(\|E_t\|_{-s}+\|E\|_{1-s}+\|n\|_{1-s}+\|B\|_{1-s}),
\]
we deduce
\begin{eqnarray}
\label{P_NG_2-est1}
\lefteqn{
\|E^{(1)}\cdot \ol{E}\|_{2-s} \le
\|\nabla E^{(1)}\cdot \ol{E}\|_{1-s}+\|E^{(1)}\cdot \nabla\ol{E}\|_{1-s}
}\nonumber
\\
& & \qquad
\le C\|E\|_{1-s}\left(\max\limits_{x\in\Omega}|P_N(\nabla E^{(1)})|+\|P_N(\nabla E^{(1)})\|_{H^1}\right)
\nonumber
\\
& & \qquad
+C\|Q_N(\nabla E^{(1)})\|_{1-s}\left(\max\limits_{x\in\Omega}|E|+\|E\|_{H^1}\right)
+C\|E\|_{2-s}\left(\max\limits_{x\in\Omega}|E^{(2)}|+\|E^{(2)}\|_{H^1}\right)
\nonumber
\\
& & \qquad\qquad
\le C(\|E_t\|_{-s}+\|n\|_{1-s}+\|B\|_{1-s})
+C\|E\|_{1-s}\left(1+\left[\log(1+\lambda_N)\right]^{1/2}\right)
+C\lambda_{N+1}^{-s/2}.\nonumber
\end{eqnarray}
Therefore the final estimate for $\|P_NG_{2}\|_{-s}$ is the following
\begin{eqnarray}
\label{P_NG_2-est}
\lefteqn{
\|G_{2}\|_{-s} \le
C(\|E_t\|_{-s}+\|n\|_{1-s}+\|B\|_{1-s})
}\nonumber
\\
& & \qquad\qquad
+C\|E\|_{1-s}\left(1+\left[\log(1+\lambda_N)\right]^{1/2}\right)
+C\lambda_{N+1}^{-s/2}.
\end{eqnarray}
\subsection*{Estimate for $\| G_{3}\|_{-2-s}$.}
Definition of $G_2(t)$ implies that
\begin{eqnarray*}
%\label{P_NG_3}
\|G_{2}\|_{-2-s} \le
\|E^{(1)}\times \ol{E}\|_{2-s}+\|E\times \ol{E^{(2)}}\|_{2-s}.
\end{eqnarray*}
Using the same argument, as for $\|E^{(1)}\cdot \ol{E}\|_{2-s}$ we obtain
\begin{eqnarray}
\label{P_NG_3-est}
\lefteqn{
\|G_{3}\|_{-2-s} \le
C(\|E_t\|_{-s}+\|n\|_{1-s}+\|B\|_{1-s})
}\nonumber
\\
& & \qquad\qquad
+C\|E\|_{1-s}\left(1+\left[\log(1+\lambda_N)\right]^{1/2}\right)
+C\lambda_{N+1}^{-s/2}.
\end{eqnarray}

\subsection*{Concluding step.}
By (\ref{uniq_2_1}), (\ref{uniq_2_2}), (\ref{G_1t_est}), (\ref{P_NG_1-est}),
(\ref{P_NG_2-est})   and (\ref{P_NG_3-est}) we have that
\begin{eqnarray}
\label{uniq_3}
\lefteqn{
\|n_t(t)\|^2_{-s}+\|n(t)\|^2_{1-s}+\|E_t(t)\|_{-s}^2+\|E(t)\|_{1-s}^2
+\|B_t(t)\|^2_{-1-s}+\|B(t)\|^2_{1-s}+2\gamma_3\int_0^t\|B_t(\tau)\|^2_{-s} d\tau
}\nonumber
\\
& & \qquad\qquad
\le
C\left(1+\left[\log(1+\lambda_N)\right]^{1/2}\right)\int_0^t \left(\|E_t(\tau)\|_{-s}^2
+\|E(\tau)\|_{1-s}^2+\|n_t(\tau)\|_{-s}^2\right) d\tau
\nonumber
\\
& & \qquad\qquad
+C\left(1+\left[\log(1+\lambda_N)\right]^{1/2}\right)\int_0^t
\|E(\tau)\|_{1-s}\|B_t(\tau)\|_{-s} d\tau
\nonumber
\\
& & \qquad\qquad
+C\int_0^t\left(\|n(\tau)\|_{1-s}^2+\|B(\tau)\|_{1-s}^2\right) d\tau
+C\lambda_{N+1}^{-s/2}.
\end{eqnarray}
Taking into account that
\begin{eqnarray}
%\label{uniq_3}
\lefteqn{
C\left(1+\left[\log(1+\lambda_N)\right]^{1/2}\right)\int_0^t
\|E(\tau)\|_{1-s}\|B_t(\tau)\|_{-s} d\tau}
\nonumber
\\
& & \qquad\qquad
\le 2\gamma_3\int_0^t\|B(\tau)\|_{1-s}^2 d\tau
+C(1+\log(1+\lambda_N))\int_0^t\|E(\tau)\|_{1-s}^2 d\tau,
\nonumber
\end{eqnarray}
(\ref{uniq_3}) implies that
\begin{eqnarray}
\label{uniq_4}
\psi(t)\le C_1\log(1+\lambda_N)\int_0^t\psi(\tau) d\tau+
C_2\lambda_{N+1}^{-s/2},
\end{eqnarray}
where
\begin{eqnarray}
\label{uniq_psi}
\psi(t)\equiv \|n_t(t)\|^2_{-s}+\|n(t)\|^2_{1-s}+\|E_t(t)\|_{-s}^2+\|E(t)\|_{1-s}^2
+\|B(t)\|^2_{1-s}.
\end{eqnarray}
Using Gronwall's lemma we infer that
\begin{eqnarray}
\label{uniq_5}
\psi(t)\le
C_2\lambda_{N+1}^{-s/2}(1+\lambda_N)^{C_1T_0}
\end{eqnarray}
for every $t\in [0,T_0]$, where $T_0<s/(2C_1)$. Now we send $N\to\infty$ and obtain that
$\psi(t)\equiv 0$ for every $t\in [0,T_0]$. Taking into account (\ref{uniq_psi}) we conclude that
$(E^{(1)}), n^{(1)}; B^{(1)})\equiv (E^{(2)}), n^{(2)}; B^{(2)})$ on $[0,T_0]$. Then we consider
the system (\ref{uniq_1_N}) with zero initial data at the moment $t=T_0$. Arguing as above, we get
$\psi(t)\equiv 0$ for every $t\in [T_0,2T_0]$. Thus, step by
step we get $(E^{(1)}, n^{(1)}; B^{(1)})\equiv (E^{(2)}, n^{(2)}; B^{(2)})$ on $[0,T]$.

\section{Smoothness}
Let us prove now (\ref{c-id}). Let $(n(t); B(t); E(t))$ be  a semi-strong solution to
problem   (\ref{Zakh_magnetic})--(\ref{bound_cond}) on an interval $[0,T]$.
In this case the function $n(t)$ is the weak (variational) solution to
the linear problem
\begin{equation}\label{n-eq}
n_{tt}-\Delta n=F_1(t)\equiv \Delta |E(t)|^2-\gamma_2 n_t(t)+g_2(x,t).
\end{equation}
It is clear to see that $F_1(t)\in L_\infty\left(0,T;L_2\left(\Omega\right)\right)$. Therefore, from \cite[Theorem~8.2]{Lions} it follows that 
\begin{equation}\label{n-sm}
(n_t; n)\in  C\left([0,T];\, L_2\left(\Omega\right)\times H^{1}_0\left(\Omega\right)\right).
\end{equation}
Similarly, the function $B$ is the weak solution to the linear problem 
\begin{equation}\label{B-eq}
B_{tt}+\Delta^2 B=F_2(t)\equiv -\Delta^2\left(i E(t)\times \overline{E}\right)+\gamma_3 \Delta B_t(t)+g_3(x,t).
\end{equation}
Taking into account that $F_2(t)\in L_2\left(0,T;H_{-3}\left(\Omega\right)\right)$ we can also use \cite[Theorem~8.2]{Lions} and conclude that 
\begin{equation}\label{B-sm}
(B_t; B)\in  C\left([0,T];\, H_{-1}\left(\Omega\right)\times H_{1}\left(\Omega\right)\right).
\end{equation}
Let $E_1=-i\Delta E_0+in_0E_0-E_0\times B_0+ig_1$ and $\tilde{E}=E_t$. 
Then $\tilde{E}$ is a solution (in the sense of distributions) of the following problem
\begin{equation}\label{E_t-eq}
i\tilde{E}_t+\Delta\tilde{E}-n\tilde{E}+i\tilde{E}\times B=F_2(t)\equiv n_t E-i E\times B_t+g_{1,t}
\quad \tilde{E}(0)=E_1.
\end{equation}
Note that (\ref{B_t}), (\ref{n-sm}) and (\ref{B-sm}) imply that $F_2(t)\in L_2\left(0,T;L_2\left(\Omega\right)\right)$ and $E_1\in L_2\left (\Omega\right )$.

Let us define now the following sesquilinear form
\[
a(t,u,v)=\int\limits_\Omega \nabla u(x)\cdot \nabla \overline{v(x)}+\int\limits_\Omega n(x,t) u(x)\cdot \overline{v(x)}
-i\int\limits_\Omega B(x,t)\cdot u(x)\times \overline{v(x)}
\]
on $\overline{H_0^1\left (\Omega\right )}$.
Taking into account that 
\begin{eqnarray*}
\lefteqn{ 
\left |\int\limits_\Omega n(x,t) u(x)\cdot \overline{v(x)}\right |+\left |\int\limits_\Omega B(x,t)\cdot u(x)\times \overline{v(x)}\right |
\le \left (\|n\|+\|B\|\right)\|u\|_{L_4}\|v\|_{L_4}
}
\\
& & \qquad
\le \sqrt{2}\left (\|n\|+\|B\|\right)\|u\|^\frac{1}{2}\|v\|^\frac{1}{2}\|u\|_1^\frac{1}{2}\|v\|_1^\frac{1}{2}
\le \delta \|u\|_1\|v\|_1+\dfrac{2}{\delta}\left (\|n\|+\|B\|\right)^2\|u\|\|v\|
\end{eqnarray*}
for every $\delta>0$, then there exist positive $\omega_1$ and $\omega_2$ such that
\[
a(t,u,u)+\omega_1\|u\|^2\ge \omega_2\|u\|^2_1,
\mbox{ for any } u\in \overline{H_0^1\left (\Omega\right )}.
\]
Therefore from \cite[Theorem 10.1]{Lions} it follows that (\ref{E_t-eq}) is uniquely solvable in distributions and $\tilde{E}\in C\left ([0, T], L_2\left (\Omega\right )\right )$. 
From the first equation of (\ref{Zakh_magnetic}), (\ref{n-sm}) and (\ref{B-sm}) we conclude that $E\in C\left ([0, T], H_2\left (\Omega\right )\right )$. 

Now we can start to prove (\ref{c-id}). We will use the standard approach based on an energy relation (see, e.g., \cite[Chap. 3]{Lions}).
At first we prove the following
\begin{pro}
\label{p:energy-rel}
Let $(n_1, n_0, B_1, B_0, E_0)\in \cH$. Then any semi-strong solution $(n, B, E)$ on an interval $[0, T]$ to problem  (\ref{Zakh_magnetic})-(\ref{initial_data}) possesses the energy relation
\begin{eqnarray}
\label{en-rel}
\lefteqn{V(n_t, n, B_t, B, E_t)+2\gamma_1\int\limits_{0}^{t}\|E_t(\tau)\|^2d\tau+\gamma_2\int\limits_{0}^{t}\|n_t(\tau)\|^2d\tau
+\gamma_3\int\limits_{0}^{t}\|B_t(\tau)\|^2d\tau}
\\ \nonumber
& & \qquad\qquad\qquad\qquad\qquad\qquad\qquad\qquad
=V(n_1, n_0, B_1, B_0, E_1)+\int\limits_{0}^{t}\tilde{R}(\tau) d\tau,
\end{eqnarray}
where 
\begin{eqnarray}
\label{en-rel-v}
V(n_t, n, B_t, B, E_t)=\dfrac{1}{2}\left (\|n_t(t)\|^2+\|\nabla n(t)\|^2+\|B_t(t)\|^2_{-1}+\|B(t)\|^2\right )+\|E_t(t)\|^2
\end{eqnarray}
and
\begin{eqnarray}
\label{en-rel2}
\lefteqn{
\tilde{R}=2\int\limits_{\Omega}n_t\left (n|E|^2+|\nabla E|^2\right )
-2\Im\int\limits_{\Omega}\left (n_tB+nB_t\right )\cdot \left (E\times \overline{E}\right )+2\Re\int\limits_{\Omega} B_t\cdot \left (\left (E\times B\right )\times \overline{E}\right )
}
\\ \nonumber
& & \qquad\qquad\quad
+2i\int\limits_{\Omega}B_t\cdot \left (\partial_1 E\times \partial_1 \overline{E}+\partial_2 E\times \partial_2 \overline{E}\right )
-2\Im\int\limits_{\Omega}B_t\cdot \left (g_1\times \overline{E}\right )+2\Re\int\limits_{\Omega}n_tg_1\cdot \overline{E}
\end{eqnarray}
and $E_1=-i\Delta E_0+in_0E_0-E_0\times B_0+ig_1$.
\end{pro}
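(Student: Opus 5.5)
The energy relation (\ref{en-rel}) is exactly the identity obtained formally by pairing the differentiated Schr\"odinger equation (\ref{E_t-eq}) with $2\overline{\tilde E}$, the $n$-equation (\ref{n-eq}) with $n_t$, and the $B$-equation (\ref{B-eq}) with $A^{-1}B_t$, then integrating over $[0,t]$. A semi-strong solution does not have enough regularity to justify these pairings directly, since $n_t$ is only in $L_2$, $\tilde E=E_t$ only in $C([0,T];\overline{L_2})$, and $B_t$ only in $H_{-1}$. I would therefore regularize spectrally: apply $P_N$ to each equation, do the computation for the finite-dimensional projected quantities, and pass to the limit $N\to\infty$. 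The regularity already established is what makes the limit work: (\ref{n-sm}), (\ref{B-sm}), $\tilde E\in C([0,T];\overline{L_2})$, $E\in C([0,T];\overline{H}_2)$, and the $L_2(0,T;L_2)$ bound (\ref{B_t}).

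\textbf{Step 1 (wave parts).} Set $n^N=P_Nn$ and $B^N=P_NB$. These satisfy $n^N_{tt}+An^N=P_NF_1$ and $B^N_{tt}+\gamma_3AB^N_t+A^2B^N=P_N(\dots)$. Both are smooth in time with values in finite-dimensional spaces, so multiplying by $n^N_t$ and $A^{-1}B^N_t$ is legitimate. This gives $\frac12\frac{d}{dt}\{\|n^N_t\|^2+\|\nabla n^N\|^2\}+\gamma_2\|n^N_t\|^2=(P_N(\Delta|E|^2+g_2),n^N_t)$, and an analogous identity for $B$ involving $\|B^N_t\|^2_{-1}$, the corresponding $B$-energy term in $V$, and the dissipation $\gamma_3\|B^N_t\|^2$. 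As $N\to\infty$ the left sides converge uniformly in $t$ by (\ref{n-sm}), (\ref{B-sm}), and $\gamma_3\int\|B^N_t\|^2\to\gamma_3\int\|B_t\|^2$ by (\ref{B_t}). The forcing terms converge because $\Delta|E|^2\in L_\infty(0,T;L_2)$ (using $E\in \overline{H}_2\subset L_\infty$) and $iA(E\times\overline E)$ pairs with $B_t\in L_2(0,T;L_2)$ after moving one $A$ across. The limit yields $\int n_t\Delta|E|^2$ and $\int iA(E\times\overline E)\cdot B_t$ (with the appropriate sign) in integrated form.

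\textbf{Step 2 (Schr\"odinger part).} For $\tilde E$, solve the projected linear problem, or equivalently use the Lions--Magenes energy equality that comes with \cite[Theorem 10.1]{Lions}. That is, take the imaginary part of the pairing of (\ref{E_t-eq}) with $2\overline{\tilde E}$. The form $a(t,\cdot,\cdot)$ is hermitian: the $n$ term is real, and the $B$ term is real because $u\times\overline u$ is purely imaginary. Hence the principal terms drop out and $\frac{d}{dt}\|\tilde E\|^2$ equals $2\Im(F,\tilde E)$ plus the damping contribution, which gives $2\gamma_1\|E_t\|^2$. Since $F=n_tE-iE\times B_t+g_{1,t}\in L_2(0,T;L_2)$ and $\tilde E\in C([0,T];L_2)$, this identity holds in integrated form. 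The nonautonomous coefficients $n,B$ are handled exactly as in \cite[Theorem 10.1]{Lions}, because $n,B\in C([0,T];H^1_0)$.

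\textbf{Step 3 (combine and rewrite the cross terms).} Sum the three identities. The coupling terms $\int n_t\Delta|E|^2$, $\int iA(E\times\overline E)\cdot B_t$, and $2\Im(n_tE-iE\times B_t,E_t)$ must be rewritten into the form of $\tilde R$. To do this, substitute $E_t=i\Delta E-inE-E\times B-\gamma_1E-ig_1$ from the first equation of (\ref{Zakh_magnetic}) and integrate by parts. Terms such as $\int n_t\Delta|E|^2$ combine with $2\Re\int n_t\Delta E\cdot\overline E$ to leave $2\int n_t|\nabla E|^2$. The $B$-coupling similarly leaves $2i\int B_t\cdot(\partial_jE\times\partial_j\overline E)$. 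The remaining products produce the terms $n_t n|E|^2$, $(n_tB+nB_t)\cdot(E\times\overline E)$, $B_t\cdot((E\times B)\times\overline E)$, and the $g_1$ terms. Each integral is finite by the regularity above, using $E\in \overline{H}_2\subset L_\infty$, $\nabla E\in L_4$, and $n,B\in L_4$.

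The main obstacle is Step 3. One has to track carefully the vector-product identities (for example $(E\times B)\cdot\overline{E}$-type cancellations and the imaginariness of $E\times\overline E$), and to justify the integrations by parts at the available regularity. I would perform these manipulations at the level of the $P_N$-regularized quantities, where every term is smooth, and only then pass to the limit using dominated convergence in $t$.
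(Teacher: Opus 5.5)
Your proposal is correct and follows the paper's own proof. The paper uses exactly these three pairings (the imaginary part of (\ref{E_t-eq}) against $\overline{E_t}$, the $n$-equation against $n_t$, the $B$-equation against $A^{-1}B_t$), integrates and sums them, and substitutes the first equation of (\ref{Zakh_magnetic}) to reach $\tilde R$; it does this only formally, so your $P_N$-regularization and appeal to the Lions--Magenes energy equality supply the justification the paper omits. Note that Step 3, done carefully, also leaves $\int_0^t\int_\Omega n_t g_2$ and $\int_0^t\int_\Omega g_3\cdot A^{-1}B_t$ and gives $\frac12\|\nabla B\|^2$ rather than $\frac12\|B\|^2$ in $V$; these are slips in the printed statement (the $g_2,g_3$ terms still appear in (\ref{en-rel6})), not defects of your argument.
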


\textit{Proof.}
Multiplying equation (\ref{E_t-eq}) in $\overline{L_2\left (\Omega\right )}$ by $\tilde{E}=E_t$ and taking the imaginary part of result we get
\[
\d\|E_t(t)\|^2+2\gamma_1\|E_t(t)\|^2=2\Im\int\limits_\Omega (n_tE-iE\times B_t)\cdot \overline{E_t}.
\]
After integrating over $[0, t]$ we obtain
\begin{equation}
\label{en-rel3}
\|E_t(t)\|^2+2\gamma_1\int\limits_0^t\|E_t(\tau)\|^2 d\tau =\|E_1\|^2+2\Re\int\limits_0^t d\tau\int\limits_\Omega (n_t\overline{E}+i\overline{E}\times B_t)\cdot iE_t 
\end{equation}
for any $t\in [0, T]$.

Multiplying the second equation of (\ref{Zakh_magnetic}) in $L_2\left (\Omega\right )$ by $n_t$ we get
\[
\dfrac{1}{2}\d\left (\|n_t(t)\|^2+\|\nabla n(t)\|^2\right )+\gamma_2\|n_t(t)\|^2=\int\limits_\Omega n_t\Delta |E|^2+\int\limits_\Omega n_t g_2.
\]
After integrating over $[0, t]$ we obtain
\begin{eqnarray}
\label{en-rel4}
\lefteqn{\dfrac{1}{2}\left (\|n_t(t)\|^2+\|\nabla n(t)\|^2\right )+\gamma_2\int\limits_0^t\|n_t(\tau)\|^2d\tau}
\nonumber\\
& & \qquad
=\dfrac{1}{2}\left (\|n_1\|^2+\|\nabla n_0\|^2\right )+\int\limits_0^td\tau\int\limits_\Omega n_t\left (\Delta |E|^2+ g_2\right )
\end{eqnarray}
for any $t\in [0, T]$.
Analogously, from the third equation of (\ref{Zakh_magnetic}) we have for any $t\in [0, T]$
\begin{eqnarray}
\label{en-rel5}
\lefteqn{\dfrac{1}{2}\left (\|B_t(t)\|_{-1}^2+\|\nabla B(t)\|^2\right )+\gamma_3\int\limits_0^t\|B_t(\tau)\|^2d\tau}
\nonumber\\
& & \qquad
=\dfrac{1}{2}\left (\|B_1\|_{-1}^2+\|\nabla B_0\|^2\right )+\int\limits_0^t d\tau\int\limits_\Omega B_t\cdot \left (g_3+i\Delta (E\times\overline{E})\right ).
\end{eqnarray}
Summing (\ref{en-rel3})-(\ref{en-rel5}) we have
\begin{eqnarray}
\label{en-rel6}
\lefteqn{V(n_t, n, B_t, B, E_t)+2\gamma_1\int\limits_{0}^{t}\|E_t(\tau)\|^2d\tau+\gamma_2\int\limits_{0}^{t}\|n_t(\tau)\|^2d\tau
+\gamma_3\int\limits_{0}^{t}\|B_t(\tau)\|^2d\tau}
\\ \nonumber
& & %\quad%\qquad\qquad\qquad\qquad\qquad\qquad
=V(n_1, n_0, B_1, B_0, E_1)+2\Re\int\limits_{0}^{t} d\tau \int\limits_\Omega n_t\overline{E}\cdot (iE_t+\Delta E)
+\int\limits_{0}^{t} d\tau \int\limits_\Omega n_t(|\nabla E|^2+g_2)
\\
\nonumber
& & %\quad 
-2\Im\int\limits_{0}^{t} d\tau \int\limits_\Omega (iE_t+\Delta E)\times B_t\cdot\overline{E}
+\int\limits_{0}^{t} d\tau \int\limits_\Omega B_t\cdot\left (
2i\partial_1 E\times \partial_1 \overline{E}+2i\partial_2 E\times \partial_2 \overline{E}+g_3\right ).
\end{eqnarray}
Since $iE_t+\Delta E=nE-iE\times B-i\gamma_1 E+g_1$ it is straightforward to obtain (\ref{en-rel}) from (\ref{en-rel6}). 

\qed

We use (\ref{en-rel}) for proving the following 
\begin{pro}
\label{p:cont}
Let $(n^k_1, n^k_0, B^k_1, B^k_0, E^k_0)\to (n_1, n_0, B_1, B_0, E_0)\in \cH$ and let $(n^k, B^k, E^k)$ be the semi-strong solution   to problem  (\ref{Zakh_magnetic}) with initial data $(n^k_1, n^k_0, B^k_1, B^k_0, E^k_0)$ on an interval $[0, T]$. 
Then 
\begin{eqnarray}
\label{en-rel7}
\lim\limits_{k\to\infty}\max\limits_{[0,T]} \left |V(n_t, n, B_t, B, E_t)-V(n^k_t, n^k, B^k_t, B^k, E^k_t)\right |=0,
\end{eqnarray}
where $V(n_t, n, B_t, B, E_t)$ is given by (\ref{en-rel-v}).
\end{pro}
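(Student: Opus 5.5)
The plan is to derive the convergence of $V$ from the energy relation (\ref{en-rel}), applied to both the solution $(n,B,E)$ and the approximating solutions $(n^k,B^k,E^k)$, and to subtract the two identities. For every $t\in[0,T]$ this gives
\[
\bigl|V(t)-V^k(t)\bigr|\le \bigl|V(n_1,n_0,B_1,B_0,E_1)-V(n^k_1,n^k_0,B^k_1,B^k_0,E^k_1)\bigr| + D_k(t) + \int_0^T\bigl|\tilde R(\tau)-\tilde R^k(\tau)\bigr|d\tau .
\]
Here $D_k(t)$ collects the differences of the dissipative integrals $2\gamma_1\int_0^t\|E_t\|^2$, $\gamma_2\int_0^t\|n_t\|^2$ and $\gamma_3\int_0^t\|B_t\|^2$. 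The proof then reduces to showing that each of the three pieces tends to zero uniformly in $t$.

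\textbf{Initial data and uniform bounds.} Since $E_1=-i\Delta E_0+in_0E_0-E_0\times B_0+ig_1$, the map from $(n_0,B_0,E_0)\in H_1\times H_1\times\ol{H}_2$ to $E_1\in\ol{H}$ is continuous. This follows from the embeddings $H_1\subset L_4$ and $\ol{H}_2\subset L_\infty$. Consequently $E^k_1\to E_1$ in $\ol{H}$, and the first term tends to zero. Next, because $U^k\to U$ in $\cH$, the initial data are bounded in $\cH$. Moreover $\|E^k_0\|\to\|E_0\|$, so the smallness condition of Theorem~\ref{t:exist} holds for large $k$. The a priori estimates (\ref{apr_est1}), (\ref{apr_est2}), (\ref{apr_est3}) and (\ref{B_t}) then hold uniformly in $k$, with the same $T$; they pass to the limit of the Galerkin scheme and, by Theorem~\ref{t:uniqueness}, apply to the unique semi-strong solutions. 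Together with the energy relation this also bounds $\|E^k_t\|$ uniformly.

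\textbf{Strong convergence of the differences.} I would establish convergence of the differences in the weaker norms used in Section~4. Repeating the argument of the uniqueness proof for the difference $(E^k-E,\,n^k-n,\,B^k-B)$ with nonzero initial data yields
\[
\psi_k(t)\le \bigl(C\psi_k(0)+C\lambda_{N+1}^{-s/2}\bigr)(1+\lambda_N)^{C_1t}
\]
on short intervals. Choosing $N$ first and then $k$ large, and iterating over subintervals of length $T_0$, gives $\max_{[0,T]}\psi_k(t)\to0$. In other words, $n^k_t\to n_t$ in $H_{-s}$, $n^k\to n$ in $H_{1-s}$, $E^k_t\to E_t$ in $H_{-s}$, and so on, uniformly in $t$. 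By interpolation with the uniform bounds, convergence then holds in every intermediate norm; for example $E^k\to E$ in $H_{2-\sigma}$, hence in $W^{1,p}$ and in $L_\infty$. The integrals $\int_0^T\|B^k_t-B_t\|^2_{-s}$ also tend to zero.

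\textbf{The remainder terms; main obstacle.} With this convergence, every term of $\tilde R^k-\tilde R$ in (\ref{en-rel2}) is handled by writing it as a product in which one factor converges strongly and the others stay bounded. The terms containing $n^k_t$ are the most delicate. The plan is to write $n^k_t=(n^k_t-n_t)+n_t$, to use that $n^k_t-n_t\to0$ in $H_{-s}$, and to check that $|\nabla E|^2$ and $n|E|^2$ converge in $H_s$; Lemma~\ref{lemma_ineq}(b) together with $E^k\to E$ in $H_{2-\sigma}$ gives the latter. The terms with $B^k_t$ are treated the same way, using the $L_2(0,T;L_2)$ bound (\ref{B_t}) and duality in $H_{-s}$ versus $H_s$.

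The dissipative integrals are the real obstacle. They involve the strong norms $\|E_t\|^2$, $\|n_t\|^2$ and $\|B_t\|^2$, and for these we only know weak convergence. For $D_k(t)$ my plan is to move these integrals to the left-hand side and argue by lower semicontinuity, in the spirit of \cite[Chap.~3]{Lions}. On one side, $\limsup_k$ of $V^k(t)$ plus the dissipative integrals equals the same quantity for the limit solution. On the other side, weak lower semicontinuity applies to each piece separately. Combining the two gives convergence of both $V^k(t)$ and the dissipative integrals for each fixed $t$. Finally, equicontinuity in $t$ of all terms upgrades this pointwise convergence to uniform convergence on $[0,T]$, which yields (\ref{en-rel7}).
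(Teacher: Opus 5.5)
Your proposal is correct and follows the paper's skeleton. Both arguments use uniform bounds from the a priori estimates plus uniqueness, and convergence of the initial energy; your explicit continuity of $(n_0,B_0,E_0)\mapsto E_1$ fills in a step the paper leaves implicit. Both then show convergence of the $\tilde R$-integrals, use the lower-semicontinuity argument to get convergence of $V^k(t)$ and of each dissipative integral for fixed $t$, and finally upgrade to uniformity in $t$.

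The genuine difference is where the strong convergence in the weaker norms comes from. The paper obtains (\ref{en-rel9})--(\ref{en-rel11}) by Aubin--Simon compactness from the uniform bounds, using Theorem~\ref{t:uniqueness} only as a black box to identify every subsequential limit. You instead rerun Sedenko's logarithmic Gronwall argument with nonzero data, getting $\psi_k(t)\le (C\psi_k(0)+C\lambda_{N+1}^{-s/2})(1+\lambda_N)^{C_1t}$, and send $k\to\infty$ before $N\to\infty$. This needs no subsequences and no passage to the limit in the nonlinear terms, and it yields an explicit modulus of continuity. The $\gamma_3$-term also gives $B^k_t\to B_t$ in $L_2(0,T;H_{-s})$, so you can show $\int_0^T|\tilde R^k-\tilde R|\,d\tau\to0$, which makes the $\tilde R$-part uniform in $t$ directly. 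The price is dependence on the internals of the uniqueness proof: constants uniform in $k$, $\gamma_3>0$ (which the paper also needs for (\ref{B_t})), and small $s$.

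Two details need fixing.

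\textbf{The product estimate for $|\nabla E^k|^2$.} Lemma~\ref{lemma_ineq}(b) does not apply here, because $\nabla E^k\notin L_\infty$ in 2D. Instead, note that $|\nabla E^k|^2$ is bounded in $W^{1,p}$ for every $p<2$, hence in $H^s$ for every $s<1$. Take $s<1/2$ so that $H_s=H^s$, since this function does not vanish on $\partial\Omega$.

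\textbf{The final uniformity step.} Equicontinuity of $t\mapsto\gamma_3\int_0^t\|B^k_t\|^2d\tau$ does not follow from the $L_1(0,T)$ bound (\ref{B_t}). Close this step by monotonicity: nondecreasing functions converging pointwise on $[0,T]$ to a continuous limit converge uniformly. Equivalently, convergence of the dissipative integral at $t=T$, combined with weak convergence, gives $B^k_t\to B_t$ strongly in $L_2(0,T;L_2)$. The paper's one-line appeal to Ascoli glosses over the same point.
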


\textit{Proof.}
Arguing as in the existence theorem, we can conclude that there exists $C$ such that  for all  $k\in\mathbb{N}$
\begin{equation}
\label{en-rel8}
\int\limits_0^T\|B_t(\tau)\|^2 d\tau+
\sup\limits_{[0,T]} \left ( \|n_t(t)\|^2+\|\nabla n(t)\|^2+\|B_t(t)\|^2_{-1}+\|B(t)\|^2+\|E_t(t)\|^2\right )\le C.
\end{equation}
Therefore using the uniqueness theorem we get that
\[
(n^k(t), n^k(t), B^k(t), B^k(t), E^k(t))\to (n(t), n(t), B(t), B(t), E(t)) \mbox{ weakly}^* \mbox{ in } L_{\infty}(0, T, \cH).
\]
By Aubin's embedding  theorem (see \cite[Corollary 4]{sim})
we also have that
\begin{equation}\label{en-rel9}
\lim_{k\to\infty}\max_{[0,T]}\left(
\|E^k_t(t)-E_t(t)\|_{-\sigma}^2 + \| E^k(t)-E(t)\|_{2-\sigma}^2\right)=0,
\end{equation}
and
\begin{equation}\label{en-rel10}
\lim_{k\to\infty}\max_{[0,T]}\left(
\|n^k_t(t)-n_t(t)\|_{-\sigma}^2+\| n^k(t)-n(t)\|_{1-\sigma}^2 \right)=0,
\end{equation}
and
\begin{equation}\label{en-rel11}
\lim_{k\to\infty}\max_{[0,T]}\left(
\|B^k_t(t)-B_t(t)\|_{-\sigma}^2+\| B^k(t)-B(t)\|_{1-\sigma}^2 \right)=0,
\end{equation}
for every $\sigma>0$. In particular, this implies that
\begin{equation}\label{en-rel12}
(n^k_t(t);n^k(t);B_t^k(t); B(t)^k;E_t^k(t);E^k(t))\to (n_t(t);n(t);B_t(t); B(t);E_t(t);E(t))
\end{equation}
for each $t\in [0,T]$ weakly in $L_2\times H_1\times L_2\times H_1\times L_2\times H_2$.
From energy relation (\ref{en-rel}) and convergences (\ref{en-rel9})-(\ref{en-rel11}) it follows that 
\begin{eqnarray}
\label{en-rel13}
\lefteqn{\limsup\limits_{k\to \infty}\left |V(n_t, n, B_t, B, E_t)
+\int\limits_0^t f(n_t(\tau), B_t(\tau), E_t(\tau))d\tau-V(n^k_t, n^k, B^k_t, B^k, E^k_t)\right.}
\nonumber
\\
& & \qquad\qquad\qquad\qquad\qquad\qquad\qquad\qquad
\left.
-\int\limits_0^t f(n^k_t(\tau), B^k_t(\tau), E^k_t(\tau))d\tau\right |=0,
\end{eqnarray}
where 
\[
f(n_t, B_t, E_t)=2\gamma_1\|E_t(t)\|^2+\gamma_2\|n_t(t)\|^2
+\gamma_3\|B_t(t)\|^2.
\]
By the property of weak convergence from (\ref{en-rel12}) we get
\[
\int\limits_0^t f(n_t(\tau), B_t(\tau), E_t(\tau))d\tau\le 
\liminf\limits_{k\to\infty} \int\limits_0^t f(n^k_t(\tau), B^k_t(\tau), E^k_t(\tau))d\tau.
\]
Hence, from (\ref{en-rel13}) we obtain that
\begin{eqnarray*}
\lefteqn{
\liminf\limits_{k\to\infty} \left (V(n^k_t, n^k, B^k_t, B^k, E^k_t)
+\int\limits_0^t f(n_t(\tau), B_t(\tau), E_t(\tau))d\tau\right )
}
\\
& & \qquad
\le \lim\limits_{k\to\infty}\left (V(n^k_t, n^k, B^k_t, B^k, E^k_t)+\int\limits_0^t f(n^k_t(\tau), B^k_t(\tau), E^k_t(\tau))d\tau\right)
\\
& & \qquad
=V(n_t, n, B_t, B, E_t)
+\int\limits_0^t f(n_t(\tau), B_t(\tau), E_t(\tau))d\tau.
\end{eqnarray*}
Thus $\liminf\limits_{k\to\infty} V(n^k_t, n^k, B^k_t, B^k, E^k_t)
\le V(n_t, n, B_t, B, E_t)$.
However, the weak convergence (\ref{en-rel12}) implies  $\liminf\limits_{k\to\infty} V(n^k_t, n^k, B^k_t, B^k, E^k_t)
\ge V(n_t, n, B_t, B, E_t)$.
Therefore 
\[
\lim\limits_{k\to\infty} V(n^k_t, n^k, B^k_t, B^k, E^k_t)= V(n_t, n, B_t, B, E_t)
\]
for all $t\in [0, T]$.
Moreover from (\ref{en-rel13}) it follows that 
\begin{equation}\label{en-rel14}
\lim\limits_{k\to\infty} \left | \int\limits_0^t f(n_t(\tau), B_t(\tau), E_t(\tau))d\tau -\int\limits_0^t f(n^k_t(\tau), B^k_t(\tau), E^k_t(\tau))d\tau\right |=0
\end{equation}
for all $t\in [0, T]$.
Finally we note that Ascoli theorem, (\ref{en-rel8}) and (\ref{en-rel14}) imply (\ref{en-rel7}).

\qed

Now we are in position to prove (\ref{c-id}). Denote $(n_t, n, B_t, B, E)$ and $(n^k_t, n^k, B^k_t, B^k, E^k)$ by $U$ and $U^k$ respectively.
From (\ref{en-rel7}) it follows that $\|U^k\|_{C([0,T], \cH)}\to \|U\|_{C([0,T], \cH)}$  and from (\ref{en-rel12}) we conclude that $U^k\to U$ weakly in $\cH$ for each $t\in [0, T]$. 
It is easy to see that
\begin{eqnarray*}
\lefteqn{\max\limits_{[0,T]} \|U-U^k\|^2_{\cH}\le
\max\limits_{[0,T]}\left (\|U\|^2-\|U^k\|^2\right )
+2\max\limits_{[0,T]}\left |(U, U-U^k)\right |}
\\
& & \le \max\limits_{[0,T]}\left (\|U\|^2-\|U^k\|^2\right )
+C\max\limits_{[0,T]}\left \|P_N( U-U^k)\right \|
+C\max\limits_{[0,T]}\left \|Q_N( U-U^k)\right \|,
\end{eqnarray*}
where $P_N$ is the projector of the first $N$ basis vectors of $\cH$ and $Q_N=I-P_N$.
It implies that
\[
\limsup_{k\to\infty}\max\limits_{[0,T]} \|U-U^k\|^2_{\cH}\le
C\limsup_{k\to\infty}\max\limits_{[0,T]}\left \|Q_N( U-U^k)\right \|.
\]
Let $h_N(t)=\left\|Q_N(U(t)-U^k(t))\right\|$.
The sequence  $\{h_N(t)\}$ is a non-increasing  sequence of continuous functions
such that  $h_N(t)\to0$ for each $t\in [0,T]$ as $N\to\infty$.
Thus, by the Dini theorem
 $h_N(t)\to 0$ uniformly in $t$ and therefore  we obtain that
\[
\lim_{k\to\infty}\max_{[0,T]}|U(t)-U^k(t)|^2 =0
\]
which implies (\ref{c-id}).
This completes the proof of Theorem~\ref{t:continuos}.

%================================================================
%================================================================

%=================================================================

\end{document}